\documentclass{amsart}

\usepackage{amssymb, graphicx, amsrefs}
\usepackage{color}

\copyrightinfo{2020}{Bobo Hua, Jun Masamune and Rados{\l}aw K. Wojciechowski}

\newtheorem{theorem}{Theorem}[]
\newtheorem{lemma}[theorem]{Lemma}

\newtheorem{corollary}[theorem]{Corollary}

\theoremstyle{definition}

\newtheorem{example}[]{Example}

\theoremstyle{remark}
\newtheorem{remark}[theorem]{Remark}

\numberwithin{equation}{section}

\newcommand{\as}[1]{\left\langle #1\right\rangle}

\newcommand{\ov}[1]{\overline{ #1}}

\newcommand{\oh}[1]{\widehat{ #1}}
\newcommand{\ms}[1]{\mathcal{ #1}}
\newcommand{\N}{\mathbb{N}}
\newcommand{\R}{\mathbb{R}}

\newcommand{\Om}{\Omega}
\newcommand{\cp}{\textup{cap}}
\newcommand{\vp}{\varphi}
\newcommand{\ve}{\varepsilon}
\newcommand{\si}{\sigma}
\newcommand{\vr}{\varrho}
\newcommand{\Deg}{\textup{Deg}}
\newcommand{\lm}{\lambda}

\providecommand{\eat}[1]{}

\newcommand{\Hm}[1]{\leavevmode{\marginpar{\tiny%
$\hbox to 0mm{\hspace*{-0.5mm}$\leftarrow$\hss}%
\vcenter{\vrule depth 0.1mm height 0.1mm width \the\marginparwidth}%
\hbox to 0mm{\hss$\rightarrow$\hspace*{-0.5mm}}$\\\relax\raggedright
#1}}}

\begin{document}

\title{Essential self-adjointness and the $L^2$-Liouville property}

\author{Bobo Hua}
\address{B.~Hua, School of Mathematical Sciences, Fudan University, 200433, Shanghai, China}
\address{Shanghai Center for Mathematical Sciences, Jiangwan Campus, Fudan University, No. 2005 Songhu Road, 200438, Shanghai, China.}
\email{bobohua@fudan.edu.cn}

\author{Jun Masamune}
\address{J.~Masamune, Department of Mathematics, Faculty of Science, Hokkaido University
Kita 10, Nishi 8, Kita-Ku, Sapporo, Hokkaido, 060-0810, Japan}
\email{jmasamune@math.sci.hokudai.ac.jp}

\author{Rados{\l}aw K. Wojciechowski}
\address{R.~K.~Wojciechowski, Graduate Center of the City University of New York, 365 Fifth Avenue, New York, NY, 10016}
\address{York College of the City University of New York, 94-20 Guy R. Brewer Blvd., Jamaica, NY 11451}

\email{rwojciechowski@gc.cuny.edu}


\date{\today}
\thanks{{B.H.~is supported by NSFC, grants no.~11831004 and no.~11926313.}}
\thanks{J.M.~is supported in part by 
JSPS KAKENHI Grant Number 18K03290 and 17H01092.}
\thanks{R.W.~is supported by PSC-CUNY Awards, jointly funded by the Professional Staff Congress and the City University of New York, and the Collaboration Grant for Mathematicians, funded by the Simons Foundation.}

\begin{abstract}
We discuss connections between the essential self-adjointness of a symmetric operator and the constancy
of functions which are in the kernel of the adjoint of the operator.  
We then illustrate this relationship in the case of Laplacians on both manifolds and graphs. Furthermore, we discuss
the Green's function and when it gives a non-constant harmonic 
function which is square integrable.
\end{abstract} 
\maketitle
\tableofcontents

\section{Introduction}

Liouville theorems give conditions under which harmonic functions are constant.
For example, the classic Liouville theorem states that all bounded harmonic functions defined
on an entire Euclidean space are constant. 
In this note, we are interested
in the case when a harmonic function being square integrable forces the constancy of the function.
We call this the $L^2$-Liouville property. 

On the other hand, essential self-adjointness deals with the uniqueness of self-adjoint extensions
of a symmetric operator. Although seemingly unrelated to the $L^2$-Liouville property, 
for a strictly positive symmetric operator
there is a characterization of essential self-adjointness via the triviality of 
functions in the kernel of the adjoint of the operator which connects these properties. 
Abstractly, this connection is certainly known to experts, see, e.g., \cite{GM}.

The aim of this note is to first summarize this connection 
for symmetric operators on general $L^2$ spaces and then illustrate the statement for Laplacians
on both graphs and manifolds. In particular, while essential self-adjointness always implies
the $L^2$-Liouville property, additional assumptions, more precisely, positivity of the bottom of the spectrum
and the space having infinite measure, are needed for the converse implication.
We will use the graph and manifold case to show that these additional assumptions are necessary.

It is well known that the Laplacian on a complete Riemannian 
manifold is essentially self-adjoint \cite{C, S}
and that such manifolds satisfy the $L^2$-Liouville property \cite{S,Y}.
Analogous results have been more recently established for Laplacians on weighted graphs
via the use of intrinsic metrics \cite{HK, HKMW}.
For an incomplete Riemannian manifold such as
$M= N \setminus K$ where $N$ is complete and $K$ is a compact submanifold,
the Laplacian is essentially self-adjoint if and only if the codimension of $K$ is greater than or equal to 4, 
see \cite{CdV, M}.
We mention \cite{BP} for further results concerning essential self-adjointness of 
the Laplacian on an incomplete manifold and \cite{I} for some results in the graph case.
In contrast, the $L^2$-Liouville property holds on $\mathbb R^n \setminus \{p\}$ for any dimension \cite{ABR}, while
it fails on $\mathbb H^n \setminus \{p\}$ for $n=2, 3$ as the Green's function on $\mathbb H^n \setminus \{p\}$ 
is in $L^2$ for $n=2, 3$ as we will discuss below.
Furthermore, we will give some analogues for the incomplete case for graphs by considering a graph
with a vertex removed and  with paths of finite length attached at neighboring vertices of the removed vertex.

We organize the paper as follows. 
In Section \ref{Preliminary results} we use general theory to point out the connection between essential self-adjointness 
and the $L^2$-Liouville property.
In Section \ref{s:examples} we first recall the setting of Laplacians on weighted graphs and
Riemannian manifolds
and then show by examples that the conditions in the general results obtained in 
Section \ref{Preliminary results} are necessary.
In Section \ref{Liouville property manifolds} we study the $L^2$-Liouville property on a manifold with a point removed.
More precisely, we discuss how the dimension being less than 4 ensures that the Green's function is
square integrable on a neighborhood of the pole and positivity of the bottom of the spectrum 
implies that the Green's function is square integrable outside of this neighborhood.
It should be mentioned that some results in this section can be obtained by combining our general theory with 
known results concerning essential self-adjointness of the Laplacian on manifolds.
In Section \ref{s:Greens_graphs} we extend the ideas of the previous section on manifolds
to weighted graphs.

Finally, let us note that, in both the manifold and graph settings, 
there are known connections between Liouville theorems for bounded
functions and recurrence, see \cite{G, W}, and between the $L^1$-Liouville property 
and stochastic completeness, see \cite{G, HK}. Hence, Liouville theorems and stochastic 
properties are connected.
In this note we add another piece to this picture in pointing out the connection between the $L^2$-Liouville
property and essential self-adjointness.

\section{Preliminary results}\label{Preliminary results}

We start with some standard Hilbert space notions. For more details, see,
for example, Sections VIII.1 and VIII.2 in \cite{RS1} and Section X.3 in \cite{RS2}.
We let $\ms{H}$ denote a Hilbert space.
We let 
$$T:D(T)\subseteq \ms{H} \longrightarrow \ms{H}$$
denote a symmetric operator with dense domain $D(T)$, adjoint $T^*$
and closure $\ov{T}=T^{**}$.
We say that $T$ is \emph{essentially self-adjoint} if $\ov{T}$ is self-adjoint,
equivalently, if $T$ has a unique self-adjoint extension.
We say that $T$ is \emph{strictly positive} if there exists a constant $C>0$ such that
$$\as{Tf, f} \geq C \|f\|^2$$ 
for all $f \in D(T)$.

We now highlight the other main property of interest which concerns the constancy of certain functions.    
In the case when $\ms{H}=L^2(X,m)$ for a measure space $(X,m)$
we say that $T$ satisfies the $L^2$-\emph{Liouville property} if every function $f \in \ker T^*$ is constant.   
In particular, we note that any function in the kernel of a self-adjoint extension of $T$ will
be constant in this case.

For the examples that we have in mind, namely Laplacians on graphs and manifolds without boundary,
it turns out that the $L^2$-Liouville property is equivalent to the fact that every function which is harmonic
in a pointwise or distributional sense and is in the corresponding $L^2$ space is constant.
We now highlight the connection between essential self-adjointness and the $L^2$-Liouville property
introduced above.
\begin{theorem}\label{Main_theorem}
Let $(X,m)$ be a measure space and let $T:D(T) \subseteq L^2(X,m) \longrightarrow L^2(X,m)$
be a symmetric operator such that every function $f \in \ker \overline{T}$ is constant.
\begin{itemize}
\item[(1)] If $T$ is essentially self-adjoint, then $T$ satisfies the $L^2$-Liouville property.
\item[(2)]  If $T$ satisfies the $L^2$-Liouville property, $T$ is strictly positive and 
$m(X)=\infty$, then $T$ is essentially self-adjoint.
\end{itemize}
\end{theorem}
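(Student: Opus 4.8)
The plan is to prove the two implications separately. Part (1) is a short unwinding of definitions, while part (2) carries the real content and uses strict positivity to convert the Liouville property into essential self-adjointness.

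\emph{Part (1).} I would use that essential self-adjointness means exactly that $\ov{T}$ is self-adjoint, i.e. $\ov{T} = (\ov{T})^* = T^*$, invoking the standard identity $(T^{**})^* = T^*$. In particular $\ker T^* = \ker \ov{T}$. Since the standing hypothesis forces every element of $\ker \ov{T}$ to be constant, the same then holds for $\ker T^*$, which is precisely the $L^2$-Liouville property.

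\emph{Part (2).} The strategy is to reduce essential self-adjointness to the single assertion $\ker T^* = \{0\}$ and then to deduce this from the Liouville property together with the infinite measure. The reduction is where strict positivity is essential. From $\as{Tf,f} \geq C\aV{f}^2$ and Cauchy--Schwarz one gets $\aV{Tf} \geq C \aV{f}$ on $D(T)$, and a routine limiting argument (taking $f_n \in D(T)$ with $f_n \to f$ and $Tf_n \to \ov{T}f$) transfers the same lower bound to $\ov{T}$. Hence $\ov{T}$ is injective and, being closed and bounded below, has closed range. Since $(\ov{T})^* = T^*$, the orthogonal complement of $\operatorname{Ran}(\ov{T})$ equals $\ker T^*$; so if $\ker T^* = \{0\}$ then $\operatorname{Ran}(\ov{T})$ is simultaneously closed and dense, hence all of $L^2(X,m)$. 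A brief duality argument then upgrades this to self-adjointness: for $g \in D(T^*)$ pick $h \in D(\ov{T})$ with $\ov{T} h = T^* g$; then $T^*(g-h)=0$, so $g - h \in \ker T^* = \{0\}$ and $g = h \in D(\ov{T})$. This shows $T^* \subseteq \ov{T}$, and since the reverse inclusion $\ov{T}=T^{**}\subseteq T^*$ is automatic for symmetric $T$, we conclude $\ov{T} = T^*$, i.e. $T$ is essentially self-adjoint. It remains to prove $\ker T^* = \{0\}$, and here the remaining two hypotheses do the work: the $L^2$-Liouville property says every $f \in \ker T^*$ is constant, while $m(X) = \infty$ forces any nonzero constant $c$ to have $\aV{c}^2 = \av{c}^2 m(X) = \infty$, so it cannot lie in $L^2(X,m)$; thus the only constant in $\ker T^*$ is $0$.

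The conceptual heart, and the step I expect to require the most care, is the reduction in part (2): it is exactly strict positivity that lets one work at the spectral point $0$ rather than at $\pm i$, as in the general deficiency-index criterion, and thereby identify the obstruction to essential self-adjointness with the very kernel $\ker T^*$ appearing in the Liouville property. The two extra hypotheses, strict positivity and infinite measure, are each used once and in an essential way, consistent with the examples in Section \ref{s:examples} showing that neither can be dropped.
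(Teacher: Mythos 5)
Your argument is correct and follows essentially the same route as the paper: part (1) is the identity $T^*=\overline{T}$ under essential self-adjointness, and part (2) reduces essential self-adjointness to $\ker T^*=\{0\}$ via strict positivity and then kills the kernel using constancy plus $m(X)=\infty$. The only difference is that where the paper simply cites Theorem X.26 of Reed--Simon for the reduction, you prove it by hand (lower bound $\aV{\overline{T}f}\geq C\aV{f}$, closed range, $\operatorname{Ran}(\overline{T})^\perp=\ker T^*$, and the duality step showing $T^*\subseteq\overline{T}$), and that self-contained argument is sound.
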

\begin{proof}
(1)  Since the adjoint is a closed operator, essential self-adjointness implies 
$T^* = \overline{T^*}= {\overline{T}}^*=\overline{T}$.
Therefore, if $f \in \ker T^*$, then $f \in \ker \overline{T}$ and thus is constant by the 
assumption on $T$. Hence, $T$ satisfies the $L^2$-Liouville property.

(2)  We note that by Theorem X.26 in \cite{RS2} and the strict positivity assumption, 
$T$  is essentially self-adjoint if and only if $\ker T^* = \{0\}$.
Let $f \in \ker T^*$. Then, as $T$ has the $L^2$-Liouville property, it follows that $f$ is constant.
Since $f \in L^2(X,m)$ and $m(X)=\infty$, it now follows that $f$ is trivial.
This completes the proof.
\end{proof}

We now comment on the assumption that every $f \in \ker \overline{T}$ is constant found in
the result above. First we note that this is only used in the proof of statement (1). However, the 
strict positivity of $T$ assumed in (2) automatically implies that any function in $\ker \overline{T}$
is actually trivial.
Furthermore, in the case of Laplacians on graphs and manifolds that are the main focus of this paper, 
$f \in \ker \overline{T}$ implies that $f$ has zero energy as we will discuss below. 
Thus, $f  \in \ker \overline{T}$ is constant in the case that the underlying space is connected.

\section{Laplacians on graphs and manifolds}\label{s:examples}
We will now illustrate the general results from the previous section with Laplacians on weighted graphs and 
manifolds. In particular, we will use these examples to illustrate the necessity of the additional assumptions
needed for the $L^2$-Liouville property to imply essential self-adjointness.

\subsection{Laplacians on weighted graphs}\label{ss:Lap_weighted_graphs}

We first introduce the setting of infinite weighted graphs and 
the associated Laplacians as in \cite{KL}.  
Let $X$ be a countably infinite set of vertices with $b:X\times X \longrightarrow [0,\infty)$ 
satisfying $b(x,x)=0$ for all $x \in X$, $b(x,y)=b(y,x)$ for all $x, y \in X$ and $\sum_y b(x,y)<\infty$ for all $x \in X$.  
When $b(x,y)>0$ we think of $x$ and $y$ as neighbors connected by an edge with weight $b(x,y)$ and write $x \sim y$.
We note that we allow vertices to have infinitely many neighbors.
Let $m:X \longrightarrow (0,\infty)$ and extend $m$ to all subsets of $X$ by additivity.  
We call $b$ a graph over the measure space $(X,m)$.
In this case, our Hilbert space is 
$$\ms{H} = \ell^2(X,m) = \{ f: X \longrightarrow \mathbb{R} \ | \ \sum_{x \in X}f^2(x)m(x)<\infty\}$$
with inner product $\langle f, g \rangle =\sum_{x \in X} f(x)g(x)m(x)$.  
We denote the corresponding norm by $\|f\|^2=\langle f, f \rangle.$

Let $C(X)=\{ f: X \longrightarrow \mathbb{R} \}$ and let
$$\ms{F} = \{f \in C(X) \ | \ \sum_{y \in X} b(x,y) |f(y)| <\infty \textup{ for all } x \in X \}.$$ 
For a function $f \in \ms{F}$, we define the formal Laplacian $\ms{L}$ as
$$\ms{L}f(x) = \frac{1}{m(x)} \sum_{y \in X} b(x,y) (f(x) - f(y))$$
for $x \in X$.
We say that a function $f \in \ms{F}$ is \emph{harmonic} if 
$$\ms{L}f=0.$$
We note that this definition does not involve any Hilbert space 
and that harmonicity does not depend on the choice of the measure $m$.

The measure starts to play a role as soon as we require the function to additionally
be in $\ell^2(X,m)$.
We now point out a case in which all harmonic functions are trivial.
This was already observed in \cite{HK}, see also \cite{KL, Sch2, Woj} for similar reasoning.

\begin{example}[Infinite measure of paths]\label{Ex:measure}
We call a sequence of vertices $(x_n)$ an infinite path if $x_n \sim x_{n+1}$
for all $n \in \N$. If every infinite path has infinite measure, i.e., 
$$\sum_n m(x_n)=\infty \quad \textup{ for all infinite paths } (x_n), $$
then any harmonic function in $\ell^2(X,m)$ is trivial. This follows, as, by a direct calculation and induction, any harmonic function
which is not constant must strictly increase over some infinite path. Since this path will have
infinite measure, it follows that such a function cannot be in $\ell^2(X,m)$.

On the other hand, it is clear that
any non-trivial constant function cannot be in $\ell^2(X,m)$ if $X$ has infinite measure. 
Thus, we see that in the case of infinite measure of infinite paths, all harmonic functions
which are in $\ell^2(X,m)$ are trivial.
In particular, the only case of interest for Liouville properties involving $\ell^2(X,m)$ 
is the case when the measure decays so that some path has finite measure.
\end{example}

We let $C_c(X)$ denote the finitely supported functions in $C(X)$ and let $L_c$ denote the restriction
of $\ms{L}$ to $C_c(X)$, that is, $D(L_c)=C_c(X)$ and $L_c f = \ms{L}f$ for all $f \in C_c(X)$.
In order for $L_c$ to be symmetric, we assume throughout that 
$$\ms{L}(C_c(X)) \subseteq \ell^2(X,m).$$
We then say that $L_c$ is essentially self-adjoint if $L_c$ has a unique self-adjoint extension.
When $\ms{L}(C_c(X)) \subseteq \ell^2(X,m)$, one can calculate that 
$$D(L_c^*)=\{ f\in \ell^2(X,m) \cap \ms{F} \ | \ \ms{L}f \in \ell^2(X,m) \}$$
and that $L_c^*$ is a restriction of $\ms{L}$ to $D(L_c^*)$, see the proof of Theorem~6 in \cite{KL}.

We note that $f \in \ker(L_c^*)$ if and only if $f \in \ell^2(X,m) \cap \ms{F}$ and $\ms{L}f=0$.
In particular, $L_c$ satisfies the $\ell^2$-Liouville property if and only if all harmonic functions
which are in $\ell^2(X,m)$ are constant.

Example~\ref{Ex:measure} shows that under the
assumption that infinite paths have infinite measure $L_c$ satisfies the $\ell^2$-Liouville property.
Furthermore, under this assumption, $L_c$ is essentially self-adjoint by Theorem~6~in~\cite{KL}.

Having introduced the two main properties of interest for the Laplacian in the graph setting we now turn
to the form perspective. This will be used to show that functions which are in the kernel
of $\overline{L}_c$ are constant when the graph is connected.
Given the symmetric operator $L_c$ we can define the associated form $Q_c$ via
$$Q_c(f,g)=\as{L_c f, g}$$
for all $f, g \in D(L_c)$.
A direct calculation gives that the form $Q_c$ is a restriction of the energy form $\ms{Q}$
which is given by
$$\ms{Q}(f,g) = \frac{1}{2}\sum_{x, y \in X} b(x,y) (f(x)-f(y))(g(x)-g(y))$$
for $f, g \in \ms{D}$ where $\ms{D}$ is the space of functions of finite energy defined as 
$$\ms{D}= \{ f\in C(X) \ | \ \sum_{x, y \in X} b(x,y) (f(x)-f(y))^2 <\infty\}.$$
A direct calculation gives the following version of a Green's formula
$$\langle L_c f, g \rangle = Q(f,g) = \ms{Q}(f,g) = \langle f, L_c g \rangle$$
for all $f, g \in D(L_c)$. In particular, we note that $L_c$ is positive and the densely defined form $Q_c$ is closable.
We denote the closure of $Q_c$ via $Q$ and note that $Q$ is a restriction
of $\ms{Q}$.

We now assume that $b$ is connected, that is, for $x, y \in X$, there exists a sequence
of vertices $(x_i)_{i=0}^n$ such that $x_0=x$, $x_n=y$ and $b(x_i,x_{i+1})>0$ for all $i=0, 1, \ldots, n-1$.
If $b$ is connected and $f \in \ker{\overline{L}_c}\subseteq D(Q)$, then it follows that 
$$Q(f) = \frac{1}{2}\sum_{x, y \in X}b(x,y)(f(x)-f(y))^2 =0$$
so that $f$ is constant.
In particular,
Theorem~\ref{Main_theorem} applies and gives the following connection in 
the setting of weighted graphs.
\begin{corollary}\label{c:graphs_esa_liouv}
Let $b$ be a connected graph over $(X,m)$ with $\mathcal{L}(C_c(X)) \subseteq \ell^2(X,m)$ 
and let $L_c$ be the restriction of the formal Laplacian to $C_c(X)$.
\begin{itemize}
\item[(1)] If $L_c$ is essentially self-adjoint, then $L_c$ satisfies the $\ell^2$-Liouville property, i.e., 
all harmonic functions in $\ell^2(X,m)$ are constant.
\item[(2)] If $L_c$ satisfies the $\ell^2$-Liouville property, $L_c$ is strictly positive and 
$m(X)=\infty$, then $L_c$ is essentially self-adjoint.
\end{itemize}
\end{corollary}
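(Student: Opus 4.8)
The plan is to obtain both statements as direct specializations of Theorem~\ref{Main_theorem}, applied to the symmetric operator $T = L_c$ on the Hilbert space $\ell^2(X,m)$. The only standing hypothesis of that theorem requiring verification is that every $f \in \ker \overline{L}_c$ be constant, and this is precisely what the form-theoretic discussion preceding the corollary supplies. First I would recall that $\overline{L}_c$ is the self-adjoint operator associated with the closed form $Q$, so that $D(\overline{L}_c) \subseteq D(Q)$ and $Q(f) = \langle \overline{L}_c f, f\rangle$ for $f$ in its domain. For $f \in \ker \overline{L}_c$ this gives $Q(f) = 0$, and since $Q$ is a restriction of the energy form $\mathcal{Q}$, the vanishing of $\frac{1}{2}\sum_{x,y} b(x,y)(f(x)-f(y))^2$ together with connectedness of $b$ forces $f$ to be constant.

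Once this hypothesis is in place, statement (1) follows immediately from Theorem~\ref{Main_theorem}(1): essential self-adjointness of $L_c$ yields that every $f \in \ker L_c^*$ is constant. To phrase this as the $\ell^2$-Liouville property for harmonic functions, I would invoke the earlier identification of the adjoint, namely that $f \in \ker L_c^*$ if and only if $f \in \ell^2(X,m) \cap \mathcal{F}$ and $\mathcal{L}f = 0$. Thus $\ker L_c^*$ is exactly the space of $\ell^2$-harmonic functions, and the conclusion reads that all such functions are constant.

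For statement (2) I would simply apply Theorem~\ref{Main_theorem}(2) under the two added hypotheses that $L_c$ is strictly positive and that $m(X) = \infty$; the abstract theorem then returns essential self-adjointness of $L_c$. Here the symmetry and positivity of $L_c$ needed to place us in the setting of the theorem have already been recorded via Green's formula.

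The hard part, such as it is, is the single genuinely graph-theoretic step: showing that $\ker \overline{L}_c$ consists only of constants. This is where connectedness and the energy form enter, in contrast to the remaining steps, which amount to bookkeeping identifications of $\ker L_c^*$ and $\overline{L}_c$ followed by a citation of Theorem~\ref{Main_theorem}.
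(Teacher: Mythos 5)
Your overall route is the same as the paper's: verify the standing hypothesis of Theorem~\ref{Main_theorem} (that every $f\in\ker\overline{L}_c$ is constant) via the energy form and connectedness, then read off (1) and (2), using the identification of $\ker L_c^*$ with the $\ell^2$-harmonic functions. That is exactly how the paper proceeds.

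There is, however, one genuine flaw in your justification of the key step. You assert that $\overline{L}_c$ is ``the self-adjoint operator associated with the closed form $Q$.'' That operator is $L$, the Friedrichs extension; in general one only has $\overline{L}_c\subseteq L$, with equality precisely when $L_c$ is essentially self-adjoint. If your assertion were true as stated, $\overline{L}_c$ would be self-adjoint and the entire corollary would be vacuous --- part (1) would have a trivially satisfied hypothesis and part (2) nothing to prove --- and the paper's later examples show $\overline{L}_c\subsetneq L$ does occur. The facts you actually need, namely $D(\overline{L}_c)\subseteq D(Q)$ and $Q(f)=\langle\overline{L}_c f,f\rangle$ for $f\in D(\overline{L}_c)$, are nonetheless true: either deduce them from the inclusion $\overline{L}_c\subseteq L$ together with $D(L)\subseteq D(Q)$ and $Q(f)=\langle Lf,f\rangle$ on $D(L)$, or argue directly that for $\varphi_n\in C_c(X)$ with $\varphi_n\to f$ and $L_c\varphi_n\to\overline{L}_c f$ one has $Q(\varphi_n-\varphi_k)=\langle L_c(\varphi_n-\varphi_k),\varphi_n-\varphi_k\rangle\to 0$, so $f\in D(Q)$ and $Q(f)=\lim_n\langle L_c\varphi_n,\varphi_n\rangle=\langle\overline{L}_c f,f\rangle=0$ when $f\in\ker\overline{L}_c$ (this is the lower-semicontinuity argument the paper spells out in the manifold case). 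With that repair, the rest of your argument goes through exactly as in the paper.
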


\begin{remark}
We note, in particular, the following consequence of (1) from Corollary~\ref{c:graphs_esa_liouv}.
A function $f \in \ms{F}$ is called $\lm$-harmonic for $\lm \in \R$ if
$$\ms{L}f=\lm f.$$
Clearly, when $\lm=0$, this just means that $f$ is harmonic. However, we note that
while the measure plays no role in the definition of a harmonic function, it does so for 
$\lm$-harmonic functions for $\lm \neq 0$. Now, by abstract theory,
it can be shown that $L_c$ is essentially self-adjoint if and only if every $\lm$-harmonic 
function for $\lm<0$ which is in $\ell^2(X,m)$ is trivial, see, for example \cite{Woj, KL} or
Theorem~6.2 in \cite{HKLW}. Hence, by taking the contrapositive of (1), we
see that the existence of a non-constant harmonic function in $\ell^2(X,m)$ implies
the existence of a non-trivial $\lm$-harmonic function in $\ell^2(X,m)$ for $\lm<0$.
\end{remark}

We will now discuss why the additional assumptions of strict positivity and infinite measure
are needed for the implication in (2) above.
In order to discuss strict positivity, we introduce the bottom of the spectrum of the Friedrichs
extension of $L_c$.
We denote the self-adjoint operator associated to the closed form $Q$ by $L$ and refer
to $L$ as the Laplacian associated to $b$ over $(X,m)$. 
We note that $\overline{L}_c \subseteq L$ and these operators
are equivalent precisely when $L_c$ is essentially self-adjoint.

We note from the spectral  theorem that $\lambda_0(L)$, the bottom of the spectrum of $L$, can be given as
$$\lambda_0(L) = \inf_{f \in D(L), f \neq 0} \frac{\langle Lf,f\rangle}{\langle f, f \rangle}= 
\inf_{\vp \in C_c(X), \vp \neq 0} \frac{\langle L_c \vp,\vp\rangle}{\langle \vp, \vp \rangle}.$$
In particular, the strict positivity of $L_c$ is equivalent to the positivity of the bottom of the spectrum of $L$, i.e.,
$\lambda_0(L)>0$.

We further note that $L$ which comes from $Q$ is not the only natural self-adjoint extension of $L_c$.
Namely, one can also consider the Neumann Laplacian $L_N$ which arises from the restriction
of the energy form $\ms{Q}$ to 
$$H^1=\{ f \in \ell^2(X,m) \ | \ \ms{Q}(f)<\infty \}.$$
We let $H_0^1 = D(Q)$ which is the closure of $C_c(X)$ with respect to the form norm
$\|f\|_{\ms{Q}}^2 = \|f\|^2 + \ms{Q}(f)$, i.e.,
$$H_0^1 = \ov{C_c(X)}^{\| \cdot \|_{\ms{Q}}}.$$ 
If $H_0^1 \neq H^1$, which is referred to as the failure of form uniqueness, then there are two distinct
self-adjoint extensions of $L_c$, namely the Laplacian $L$ and the Neumann Laplacian $L_N$, 
and thus $L_c$ is not essentially self-adjoint.

We now introduce a number of analytic and geometric quantities which will be needed for the examples
below. For more details, see \cite{HKMW}.
Specifically, whenever $\varrho$ is a strongly intrinsic path metric and $\ov{X}$ is the metric completion
of $X$ with respect to $\varrho$, then we let $\partial X = \ov{X} \setminus X$ denote the Cauchy 
boundary of $X$. 
For a set $K \subseteq X$, we let 
$$\mbox{Cap}(K)= \inf \{ \|f\|_{\ms{Q}} \ | \  f \in H^1, f \geq 1 \mbox{ on } K\}$$
denote the capacity of $K$ and let 
$$\mbox{Cap}(\partial X) = \inf \{ \mbox{Cap}(K \cap X) \ | \ \partial X \subseteq K \mbox{ with } K \subseteq \ov{X}
\mbox{ open} \}.$$
It is known that if $\mbox{Cap}(\partial X) < \infty$, then
$\mbox{Cap}(\partial X)>0$ if and only if $H^1_0 \neq H^1$, see Theorem 3 in \cite{HKMW}.
We note that in the case that every infinite path has infinite measure as discussed
in Example~\ref{Ex:measure} above, it is not hard to see that if $\partial X \neq \emptyset$, then
$\mbox{Cap}(\partial X)=\infty.$

We now show the necessity of assuming both infinite measure
and strict positivity in order for the $\ell^2$-Liouville property to imply essential self-adjointness.

\begin{example}[Necessity of additional assumptions in (2) of 
Corollary~\ref{c:graphs_esa_liouv}]\label{e:graphs_add} \ 

(i) Necessity of $m(X)=\infty$. 

Consider the graph with $X =\mathbb{N}=\{1, 2, 3, \ldots\}$ 
and $b(x,y)>0$ if and only if $|x-y|=1$.  
It is easy to see that if $\ms{L}f = 0$, then $f$ is constant. 
In particular, $L_c$ has the $\ell^2$-Liouville property
regardless of the choice of the measure $m$.

On the other hand, if
$$m(X)<\infty \qquad \textup{ and } \qquad \sum_{n=1}^\infty \frac{1}{b(n,n+1)}<\infty$$
it turns out that $L_c$ is not essentially self-adjoint. 
This can be seen as follows:
we consider $\lm$-harmonic functions for $\lm<0$, i.e., functions satisfying $\ms{L}f=\lambda f$
for $\lm<0$. As mentioned above, the essential self-adjointness of $L_c$
is equivalent to the triviality of such functions in $\ell^2(X,m)$.
Now, such a function $f$ is in $\ell^2(X,m)$ if $m(X)<\infty$ and $f$ is bounded. 
If $f$ is non-trivial and $m(X)<\infty$, then $f$ is bounded if and only if
$$\sum_{n=1}^\infty \frac{1}{b(n,n+1)}<\infty$$
as can be seen by applying Lemma 5.4 from \cite{KLW}.  
Therefore, in the case that $m(X)<\infty$ and $\sum_n 1/b(n,n+1)<\infty$,
we have a non-trivial $\lm$-harmonic function for $\lm<0$ which is in $\ell^2(X,m)$
and thus $L_c$ is not essentially self-adjoint by Theorem~6.2 in \cite{HKLW}. This shows the 
necessity of $m(X)=\infty$ in (2) of Corollary~\ref{c:graphs_esa_liouv}.

We now make some further remarks which will be used for the second example directly below.
In particular, we want to establish that the Cauchy boundary of $X$ has positive and finite capacity.
As $m(X)<\infty$ and the graph is stochastically incomplete by Theorem~5 in \cite{KLW}, 
it follows that $H^1\neq H^1_0$ as stochastic incompleteness, transience and $H^1 \neq H^1_0$
are all equivalent in the case of finite measure by results of \cite{Sch}, see also \cite{GHKLW}.
As the capacity of the Cauchy boundary is finite since the measure
of the graph is finite, Theorem 3 in \cite{HKMW} gives that
$$0<\mbox{Cap}(\partial X)<\infty.$$
This will be used in the next example directly below.

We further note that $\lambda_0(L)>0$ in this case by Theorem~3 in \cite{KLW} so that 
$L_c$ is strictly positive. To summarize:
$L_c$ is a strictly positive operator which satisfies the $\ell^2$-Liouville property
but is not essentially self-adjoint. \\

(ii) Necessity of strict positivity. 

We build on the example above. Specifically, we let $X=X_1 \cup X_2$ where
$X_1 =\mathbb{N}=\{1, 2, 3, \dots\}$ and $b(x,y)>0$ for $x, y \in X_1$ if and only if $|x-y|=1$ with 
$m(X_1)<\infty$ and $\sum_n 1/b(n,n+1)<\infty$ so that this is a graph as in (i)
above. 
We let $X_2= -\mathbb{N}=\{-1, -2, -3, \ldots\}$
with $b(x,y)=1$ for $x, y \in X_2$ if and only if $|x-y|=1$ and zero otherwise and $m$ satisfying
$m(X_2)=\infty$. Finally, we let $b(-1,1)=b(1,-1)>0$
so that the resulting graph is connected and add no other edges.

We now show that $L_c$ has the $\ell^2$-Liouville property. We first note that if $f \in \ms{F}$ satisfies $\ms{L}f=0$
and there exists a pair of vertices $x,y \in X$ such that $x \sim y$ and $f(x)\neq f(y)$, then 
$f(x) \neq f(y)$ for all $x, y \in X$ with $x \sim y$. Hence, if $f$ is harmonic and not constant, then it is not 
equal on all neighbors. Now, suppose that $f$ is harmonic, $f(-1) < f(-2)$ and let $C = f(-2)-f(-1)>0$. It follows by induction 
using $\ms{L}f=0$ that $f(-n) = f(-n+1)+C$ so that
$$f(-n) = f(-2)+C \cdot (n-2) $$ 
for all $n \geq 2$. 
In particular, $f$ will not be bounded above on $X_2$ and thus $f \not \in \ell^2(X,m)$ since $m(X_2)=\infty$. 
A similar argument works if $f(-1) > f(-2)$.
Therefore, all harmonic functions which are in $\ell^2(X,m)$ are constant and, furthermore, trivial
as $m(X)=\infty$. In particular,
$L_c$ has the $\ell^2$-Liouville property.

We now show that $L_c$ is not strictly positive.
By letting $1_n = 1_{K_n}$ denote the characteristic function of the set
$K_n=\{-1, -2, \ldots, -n\}$ we see that
$$\lambda_0(L) \leq \frac{\langle L_c 1_n,1_n\rangle}{\langle 1_n, 1_n \rangle}= \frac{2}{m(K_n)} \to 0$$
as $n \to \infty$. Therefore, $L_c$ is not strictly positive. 

Finally, by applying Theorem~3 from
\cite{HKMW}, we see that $L_c$ is not essentially self-adjoint. More specifically, as the usual combinatorial graph
metric is equivalent to a strongly intrinsic path metric on $X_2$,
the Cauchy boundary of $X$ is equal to the Cauchy boundary of $X_1$, that is,
$\partial X = \partial X_1$. In particular, since the Cauchy boundary of $X_1$ has finite and positive
capacity as discussed in (i) above, it follows that $H_0^1 \neq H^1$ and thus $L_c$ is not essentially self-adjoint. 

We further note that $m(X)=\infty$ in this case. To summarize:
$L_c$ satisfies the $\ell^2$-Liouville property
and the space has infinite measure but $L_c$ is not essentially self-adjoint.
\end{example}

 
\subsection{Laplacians on manifolds}
We now introduce Laplacians on Riemannian manifolds. For further background
see, e.g., \cite{G19}.  
We let $(M,g)$ be a smooth connected Riemannian manifold.
We consider the measure space $(M,\mu)$ with the Riemannian measure $d\mu$ associated to $g$.
Our Hilbert space is $\mathcal H = L^2(M, \mu)= \{u \in L^0 \mid \|u\|<\infty\}$ where $\|u\|$ is induced by
the inner product
\[
\langle u,v \rangle = \int_M uv \, d\mu
\]
for $u,v \in L^2(M, \mu)$.
The Laplacian is defined as a distribution via $\Delta u = -\mbox{div} \nabla u$.
Here, $\nabla$ is defined by
\[g(\nabla u, \xi) (x)= \xi u (x) \]
for all $x \in M$ and all $\xi \in \Gamma(TM)$ where
$ \Gamma(TM)$ is the space of all smooth sections of the tangent bundle $TM$ and
$ \mbox{div}:\Gamma(TM)\longrightarrow  \Gamma(TM)$ is defined by
\[\langle \mbox{div} \xi, \psi \rangle = \int_M g(\xi, -\nabla \psi)\, d\mu\]
for all $\psi \in C_c^\infty(M)$ where $C_c^\infty(M)$ is the space of smooth functions with compact support on $M$.
If $M$ has a boundary, then we always assume that $M$ is orientable and impose Neumann boundary conditions for div.

A generalized function $u$ is called \emph{harmonic} if $u$ is a distribution and $\Delta u =0$ as a distribution.
We note that by the hypo-ellipticity of $\Delta$, any harmonic function is smooth.
We denote by $T$ the operator acting as $\Delta$ on the 
domain $C_c^\infty(M)$.
We note that if $M$ has a boundary, then $T$ satisfies Neumann boundary conditions.
By Green's formula,
\[ \langle u,\Delta v \rangle = \int_M g(\nabla u, \nabla v)\, d\mu = \langle \Delta u, v \rangle\]
for $u,v \in D(T)$. Therefore, $T$ is symmetric. 

On any Riemannian manifold, there are two self-adjoint extensions of $T$, namely,
$\Delta_D = \nabla_D^* \nabla_D$,
the Dirichlet Laplacian, and $\Delta_N =\nabla_N^*\nabla_N$, the Neumann Laplacian.
The domains of these operators are
\begin{align*}
 D(\nabla_D) &=H^1_0= \mbox{the closure of $C_c^\infty$ in $H^1$}  \\
 D(\nabla_N)&=H^1=\{u \in L^2 \mid \nabla u \in \vec L^2\}
\end{align*}
where $H^1$ is a Hilbert space with $\langle u,v \rangle_{1} = \langle u,v \rangle + \langle \nabla u, \nabla v \rangle$. 
Furthermore, $T$ is strictly positive if and only if the bottom of the spectrum of $\Delta_D$ is positive, that is,
\[\lambda_0 (\Delta_D) = \inf_{u \in H^1_0(M),\ u \neq 0} { \|\nabla u\|^2 \over \|u\|^2 } > 0.\]
We consider the energy form $a$ with domain $H^1_0$ and acting as
\[a(f) = \int_M g(\nabla f, \nabla f)\, d\mu\]
for $f \in H^1_0$.
Let $f$ be in the kernel of $\overline{T}$ and let $(f_n)$ be a sequence in $C_c^\infty(M)$ such that $f_n \to f$ and 
$T f_n \to \overline T f$ in $\mathcal H$.
Then, by the Green's formula above and by lower semicontinuity of $a$ which follows as $a$ is
a closed form by definition, we obtain
\[
0=\langle \overline T f, f \rangle=
\lim_{n\to\infty} \langle T f_n, f_n \rangle = \liminf_{n\to\infty} a(f_n) \ge a(f).
\]
Hence, $\nabla f=0$, and by the weak Poincar\'e inequality, see \cite{CL}, 
$f$ is constant.
In particular, all functions in the kernel of $\overline{T}$ are constant and thus 
Theorem~\ref{Main_theorem} applies and gives the following connection in 
the setting of Riemannian manifolds.
\begin{corollary}\label{manifold_corollary}
Let $M$ be a connected Riemannian manifold. 
Let $T$ be the Laplacian with domain $C^\infty_c(M)$.
\begin{itemize}
\item[(1)] If $T$ is essentially self-adjoint, then $T$ satisfies the $L^2$-Liouville property.
In particular, all harmonic functions in $L^2(M,\mu)$ are constant if $M$ has no boundary.
\item[(2)] If $T$ satisfies the $L^2$-Liouville property, $T$ is strictly positive and 
$\mu(M)=\infty$, then $T$ is essentially self-adjoint.
\end{itemize}
\end{corollary}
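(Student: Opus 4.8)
The plan is to deduce both statements directly from Theorem~\ref{Main_theorem}, so that the only genuine work lies in checking the standing hypothesis of that theorem---that every $f \in \ker \overline{T}$ is constant---and, for the ``in particular'' clause of (1), in identifying $\ker T^*$ with the space of $L^2$ harmonic functions. Since $C_c^\infty(M)$ is dense in $\mathcal{H} = L^2(M,\mu)$ and $T$ is symmetric by Green's formula, the hypotheses of Theorem~\ref{Main_theorem} concerning the operator itself are already in place.

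First I would establish that every $f \in \ker \overline{T}$ is constant. This is precisely the computation recorded just before the corollary: given $f \in \ker \overline{T}$, one selects $(f_n) \subseteq C_c^\infty(M)$ with $f_n \to f$ and $T f_n \to \overline{T} f = 0$ in $\mathcal{H}$; because the energy form $a$ is closed, this forces $f \in H^1_0$, and Green's formula together with the lower semicontinuity of $a$ yields $a(f) = 0$. Hence $\nabla f = 0$, and the weak Poincar\'e inequality \cite{CL} combined with the connectedness of $M$ gives that $f$ is constant. With this hypothesis verified, Theorem~\ref{Main_theorem} applies verbatim: part (1) follows from Theorem~\ref{Main_theorem}(1), while part (2) follows from Theorem~\ref{Main_theorem}(2), the latter invoking the additional assumptions that $T$ is strictly positive and $\mu(M) = \infty$.

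It then remains only to justify the ``in particular'' statement of (1), namely that when $M$ has no boundary the $L^2$-Liouville property says exactly that every $L^2$ harmonic function is constant. Here I would unwind the adjoint. By Green's formula, for $u \in L^2(M,\mu)$ one has $u \in D(T^*)$ with $T^* u = w$ precisely when $\langle u, \Delta \varphi \rangle = \langle w, \varphi \rangle$ for all $\varphi \in C_c^\infty(M)$, that is, when $\Delta u = w$ in the distributional sense. Since $M$ has no boundary, the test functions $C_c^\infty(M)$ detect the full distributional Laplacian with no boundary contributions, so $\ker T^* = \{\, u \in L^2(M,\mu) : \Delta u = 0 \text{ distributionally} \,\}$; by the hypo-ellipticity of $\Delta$ every such $u$ is smooth and harmonic. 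Thus the $L^2$-Liouville property for $T$ is exactly the constancy of all $L^2$ harmonic functions, as claimed.

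The main obstacle I anticipate is not in the reduction to Theorem~\ref{Main_theorem}, which is formal, but in the two analytic inputs underlying it: the passage from $\nabla f = 0$ to $f$ constant for a merely $H^1_0$ function on a possibly incomplete connected manifold, which is exactly where the weak Poincar\'e inequality is indispensable, and the careful identification of $D(T^*)$ with distributional solutions, where the boundaryless hypothesis is precisely what guarantees that harmonicity tested against compactly supported functions coincides with membership in $\ker T^*$.
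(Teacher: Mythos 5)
Your proposal is correct and follows essentially the same route as the paper: the paper verifies that every $f \in \ker \overline{T}$ is constant via exactly the argument you give (approximating sequence, Green's formula, lower semicontinuity of the closed energy form, then the weak Poincar\'e inequality), and then invokes Theorem~\ref{Main_theorem}. Your additional unwinding of $\ker T^*$ as the $L^2$ distributional harmonic functions for the ``in particular'' clause is a reasonable elaboration of a point the paper only asserts in passing before the corollary.
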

We note that Gaffney showed that if $M$ is complete, then $H^1_0 = H^1$ \cite{G1, G2}. 
We remark that the condition $H^1_0 = H^1$ does not imply the 
essential self-adjointness of $T$. For example, if $M = N \setminus K$
where $N$ is a complete Riemannian manifold with dimension $n$
and $K$ is a compact submanifold with dimension $k$,
then $H^1_0= H^1$ if and only if $n-k \ge 2$ and
$T$ is essentially self-adjoint  if and only if $n-k \ge 4$, see \cite{CdV, M}. 
Clearly, if $H^1_0 \neq H^1$,  then $\nabla_D \neq \nabla_N$ so that $T$ is not essentially self-adjoint.

We let $\overline{M}$ denote the metric completion of $M$
and call $\partial M  = \overline{M} \setminus M$ the Cauchy boundary of $M$.
We recall that the capacity of $\partial M$ is given by
\[\mbox{Cap}(\partial M) = \inf \{ \|u \|^2_1 \mid u \in H^1, u=1
\mbox{ on a neighborhood of $\partial M$ in $M$}  \}. \]
It is known that if $0<\mbox{Cap}(\partial M)<\infty$, then $H^1_0 \neq H^1$, see \cite{M}.

We now show the necessity of the additional assumptions in statement (2) of 
Theorem~\ref{Main_theorem} in this continuous model.
 
\begin{example}[Necessity of additional assumptions in (2) of Theorem~\ref{Main_theorem}]
\ 

(i) Necessity of strict positivity. 

To show the necessity of $T$ being strictly positive in order for the $L^2$-Liouville property to imply essential
self-adjointness, we
consider $T$ on the half-line $(0,\infty) \subset \mathbb R$.
We observe that $T$ is not strictly positive. Indeed, let
\[
u_n (x) =
\begin{cases}
0, & x \in (0,1),\\
{1 \over n^3}(x-1), &x \in [1,n^2+1], \\
{1 \over n^3}((2n^2+1)-x), &x \in [n^2+1,2n^2+1],\\
0, & x > 2n^2+1.
\end{cases}
\]
Then $u_n \in H^1_0 ((0,\infty))$ and
\[
\lambda_0(\Delta) \le { \|\nabla u_n\|^2 \over \| u_n\|^2 } \to 0
\]
as $n \to \infty$. Thus, $T$ is not strictly positive.

The Cauchy boundary of $(0,\infty)$ is $\{0\}$.  
If $\mbox{Cap}(0) =0$, then there exists a sequence $(f_n)$ in $H^1$  
and sequences $(x_n)$ and $(y_n)$ in $(0,\infty)$ such that 
 $0< x_n < y_n$, $y_n\to0$ as $n\to\infty$ with
 \[
 \begin{cases}
 f_n|_{(0,x_n)}=1 \\
 \Delta f_n|_{(x_n,y_n)} =0 \\
 f_n|_{(y_n,\infty)}=0 
\end{cases}
 \]
and 
\[ 0 = \lim_{n\to\infty} \int_{x_n}^{y_n} |f'_n(x)|^2\, dx.\]
Since $f_n(x) = {1 \over x_n-y_n} (x-y_n)$ for $x \in (x_n,y_n)$, we get
\[
\int_{x_n}^{y_n} |f'_n(x)|^2\, dx =
 {1 \over y_n-x_n}
\]
which clearly does not tend to 0. This contradiction implies $\mbox{Cap}(0) >0$
so that $H^1_0 \neq H^1$ and thus
$T$ on $(0,\infty)$
is not essentially self-adjoint.

However, since any harmonic function
 $f$ on $(0,\infty)$, i.e., any function satisfying $f\cprime \cprime =0$, is affine, 
in order for $f$ to be in $L^2$, $f$ must be equal to 0. This gives
the $L^2$-Liouville property. \\

(ii) Necessity of  $\mu(X)=\infty$. 

To show the necessity of $\mu(X)=\infty$, we
consider the interval $(0,1] \subset \mathbb R$.
This is a manifold with boundary $\{1\}$ so that
we impose the Neumann boundary condition at $\{1\}$. 
\eat{That is, the domain of $T$ is
\[D= \{u \in C^\infty_c ((0,1])\mid u'(1)=0 \}\]
and $T$ acts as $\Delta$.}
Then $T$ satisfies the assumptions of Theorem \ref{Main_theorem} and
is strictly positive. In fact,
for $u \in D(T)=C_c^\infty((0,1])$ we have
\begin{align*}
\|u\|^2_{L^2}&=\int^1_0|u(x)|^2\, dx  \\
&= \int^1_0 \left( \int^x_0 |u'(s)|\, ds \right)^2 dx \le \int^1_0 dx \int^1_0 |u'(s)|^2\, ds
=\|u'\|^2_{L^2}.
\end{align*}
Moreover, $T$ is not essentially self-adjoint by the same argument as in (i).
However, since any harmonic function $f$ on $(0,1]$ is affine, 
in order for $f$ to be in $D(T^*)$, $f$ must be a constant function 
by the Neumann boundary condition at $\{1\}$. 
This implies that $T$ satisfies the $L^2$-Liouville property.
\end{example}


\section{The $L^2$-Liouville property of the Laplacian on $M \setminus \{p\}$} \label{Liouville property manifolds}
As mentioned in the introduction, the Laplacian on a complete manifold satisfies the 
$L^2$-Liouville property and is essentially self-adjoint. In this section, we consider the case
of a manifold with a point removed, which is thus not complete, and show that these properties may fail.

More specifically,  we consider the case of a manifold with a point removed and 
show that positivity of the bottom of the spectrum and the dimension of the manifold are the keys to the failure 
of the $L^2$-Liouville property.
In particular, we let $M\setminus \{p\}$ be a manifold of dimension 2 or 3 with a point $p$ removed and suppose that 
$\lm_0=\lm_0(\Delta_D)>0$ for the bottom of the spectrum of the Dirichlet Laplacian $\Delta_D$.
Then, the fact that the dimension is 2 or 3 will imply that the Green's function with pole at $p$ is in $L^2$ on a neighborhood of $p$
and $\lambda_0>0$ will imply the Green's function is in $L^2$ outside of a compact set which includes $p$, implying the 
failure of $L^2$-Liouville property of $M \setminus \{p\}$.
We note that the Laplacian on $M \setminus \{p\}$ 
is essentially self-adjoint if and only if $\mbox{dim } M \ge 4$, see \cite{CdV,M}.

We start by recalling some basic facts and proving an estimate for the Green's function in terms of capacity.
The capacity cap$(\Omega)$
of a precompact open set $\Omega \subseteq M$ is defined as
\[\textrm{cap}(\Omega) = \lim_{n\to\infty} \inf_{\psi \in \textup{Lip} (K_n, \Omega)}\|\nabla \psi\|^2_{L^2(K_n)} \]
where $(K_n)$ is an exhaustion of $M$ with $\overline{\Omega} \subseteq K_n$ and
$\textup{Lip}(K_n, \Omega) \subseteq H^1_0(M)$ is the set of locally Lipschitz functions $\psi$
on $M$ with compact support in $K_n$ 
such that $0 \le \psi \le 1$ and $\psi|_{\overline \Omega}=1$ \cite{G}.
We note that this notion is distinct from the capacity of the Cauchy boundary introduced in the previous section.
We recall that $\inf_{\psi \in \textup{Lip} (K_n, \Omega)}\|\nabla \psi\|^2_{L^2(K_n)}$ is attained by 
the equilibrium potential $u_n$ of $\Omega$ in $K_n$ which is 
the unique solution to
\[
\begin{cases}
\Delta u_n (x)=0, & x \in K_n \setminus \overline{\Omega}\\
u_n(x) =0, & x \in M \setminus K_n\\
u_n(x)=1, & x \in \overline\Omega.
\end{cases}
\]

\begin{lemma}\label{lemma5}
Let $M$ be a connected Riemannian manifold $M$ without boundary.
Let $p \in M$ and let $\Omega \subset M$ be a precompact open set
with $p \in \Omega$.
Let $\lambda_0 = \lambda_0(\Delta_D)$ denote the bottom of the spectrum of the Laplacian $\Delta_D$.
If $\lambda_0>0 $, then $M$ admits a positive Green's function $G$ and
\[
\|G(p, \cdot)\|_{L^2(M \setminus \Omega,\mu)} \le 
{C \over \sqrt{ \lambda_0}}\sqrt{ \textnormal{cap}(\Omega)}
\]
where $C = \sup_{y \in \partial \Omega} G(x,y)$.
\end{lemma}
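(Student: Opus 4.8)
The plan is to dominate $G(p,\cdot)$ pointwise on $M\setminus\Omega$ by a constant multiple of the equilibrium potential of $\Omega$, and then to convert this pointwise bound into the desired $L^2$ estimate using the spectral gap $\lambda_0>0$. I would run the whole argument along the exhaustion $(K_n)$ rather than directly on the noncompact region $M\setminus\Omega$, since the maximum principle is cleanest on the compact sets $K_n\setminus\overline\Omega$. First I would record that $\lambda_0>0$ forces transience, so that $M$ carries a minimal positive Green's function $G$, realized as the increasing limit $G_n\nearrow G$ of the Dirichlet Green's functions $G_n$ of the domains $K_n$ (this is standard; see \cite{G19}). Because $\partial\Omega$ is compact and $G(p,\cdot)$ is continuous and harmonic away from $p\in\Omega$, the constant $C=\sup_{y\in\partial\Omega}G(p,y)$ is finite, and $G_n(p,\cdot)\le C$ on $\partial\Omega$ for every $n$.

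The core step is a maximum-principle comparison on $K_n\setminus\overline\Omega$, where both $G_n(p,\cdot)$ and the equilibrium potential $u_n$ are harmonic (the pole $p$ lies inside $\Omega$). Setting $w=Cu_n-G_n(p,\cdot)$, I have $w\ge 0$ on $\partial\Omega$ (there $u_n=1$ while $G_n(p,\cdot)\le C$) and $w=0$ on $\partial K_n$ (there $u_n=0=G_n(p,\cdot)$); the minimum principle for the harmonic function $w$ then gives $w\ge 0$, that is, $G_n(p,\cdot)\le Cu_n$ throughout $K_n\setminus\Omega$. This is exactly why I pass to the Dirichlet Green's functions $G_n$: a direct comparison of the minimal $G$ with $u_n$ breaks down on the outer boundary $\partial K_n$, where $G>0$ but $u_n=0$.

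Squaring, integrating over $K_n\setminus\Omega$, and invoking both $u_n\in H^1_0(M)$ and the variational inequality $\lambda_0\|u_n\|^2\le\|\nabla u_n\|^2$, I obtain
\[\int_{K_n\setminus\Omega}G_n(p,\cdot)^2\,d\mu\le C^2\|u_n\|^2\le\frac{C^2}{\lambda_0}\|\nabla u_n\|^2.\]
Letting $n\to\infty$ then finishes the proof: on the left the integrands increase to $G(p,\cdot)^2\mathbf 1_{M\setminus\Omega}$ since $G_n\nearrow G$ and $K_n\nearrow M$, so monotone convergence applies, while on the right $\|\nabla u_n\|^2\to\mathrm{cap}(\Omega)$ because $u_n$ realizes the infimum defining the capacity. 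This gives $\|G(p,\cdot)\|_{L^2(M\setminus\Omega)}^2\le C^2\,\mathrm{cap}(\Omega)/\lambda_0$, and taking square roots yields the stated bound.

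The step I expect to demand the most care is the passage to the limit: I must ensure that the regions $K_n\setminus\Omega$ genuinely exhaust $M\setminus\Omega$, that the monotone convergence $G_n\nearrow G$ is legitimate (which is precisely where transience and the construction of the minimal Green's function enter), and that the capacity is correctly identified with $\lim_n\|\nabla u_n\|^2$. Controlling the behaviour at the outer boundary through the exhaustion, instead of attempting a single global comparison on the noncompact set $M\setminus\Omega$, is what keeps each of these limits clean.
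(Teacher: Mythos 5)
Your proposal is correct and follows essentially the same route as the paper's proof: comparing the Dirichlet Green's functions $G_n(p,\cdot)$ with $C u_n$ on $K_n\setminus\Omega$ via the maximum principle, applying the variational inequality $\lambda_0\|u_n\|^2\le\|\nabla u_n\|^2$, and passing to the limit by monotone convergence together with $\|\nabla u_n\|^2\to\mathrm{cap}(\Omega)$. Your added remarks on why the comparison must be run with $G_n$ rather than $G$ directly are a sound elaboration of the same argument.
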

\begin{proof}
Let $\Omega \subset M$ be a precompact open set with $x \in \Omega$ and let $(K_n)$ be an exhaustion of $M$
such that $\Omega \subset K_1$.
Since $\lambda_0>0$,  $M$ is transient and thus Proposition 10.1 of \cite{G} implies that
$M$ admits a positive Green's function $G$ and
\[\mbox{cap}(\Omega)>0.\]
For $p \in \Omega$, set $g_n(\cdot)=G_n(p,\cdot)$ and $g(\cdot)= G(p,\cdot)$, 
where $G_n$ is the Green's function of $K_n$ with Dirichlet boundary conditions
extended by 0 to $M \setminus K_n$.
Let 
\[C= \sup_{y \in \partial \Omega} g(y).\]
For $n>1$, $g_n \le g$ by the maximum principle.
Let $u_n \in H^1_0(M)$ be the equilibrium potential of $\Omega$ in $K_n$.
Since $g_n \le Cu_n$ on $\partial ( K_n \setminus \Omega )$,
it follows that $g_n \le Cu_n$ on $ K_n \setminus \Omega$ for every $n>1$ by the maximum principle.
Hence
\begin{align*} \|g_n\|_{L^2(K_n \setminus \Omega)} &\le 
C\|u_n\|_{L^2(K_n \setminus \Omega)} \\
&\le 
{C\over \sqrt{\lambda_0}} \|\nabla u_n\|_{L^2(K_n \setminus \Omega)}  \to 
{C\over \sqrt{\lambda_0}}  \sqrt{\mbox{cap}(\Omega)}
\end{align*}
as $n\to\infty$.
Since $0 \le g_n\le g_{n+1}$ and $g_n \to g$ as $n \to \infty$ on $M \setminus \Omega$, 
we apply Beppo Levi's monotone convergence theorem to get
\[
\|g\|_{L^2(M \setminus \Omega)} \le 
{C \over \sqrt{\lambda_0}} \sqrt{\mbox{cap}(\Omega)}
\]
which completes the proof.
\end{proof}

\begin{remark}
That $\lm_0>0$ implies the Green's function is in $L^2$ outside
of a compact set can also be obtained via decay estimates on the Green's function on a complete manifold, see
Corollary~22.4 in \cite{Li}. Our approach is more elementary as we only use the capacity of a set
and the associated equilibrium potential.
Furthermore, we do not assume completeness of the starting manifold.
\end{remark}

The result above gives that the Green's function is in $L^2$
outside of a compact set provided that the bottom of the spectrum is strictly positive. 
We now combine this with the fact that the Green's function is in $L^2$ on this compact
set if the dimension is small to show the failure of the $L^2$-Liouville property for manifolds
with positive bottom of the spectrum and small dimension.

\begin{theorem}\label{t:NoLiouv_manifolds}
Let $M$ be a connected Riemannian manifold without boundary.
Let $\lm_0 = \lm_0(\Delta_D)$ denote the bottom of the spectrum of the Laplacian $\Delta_D$.
If the dimension of $M$ is 2 or 3 and $\lambda_0>0$, then $G(p,\cdot)$, 
the positive Green's function with pole at $p \in M$, is in  $L^2 (M \setminus \{p\}, \mu)$. 
In particular, the $L^2$-Liouville property fails on
$M \setminus \{p\}$ for any $p \in M$. 
\end{theorem}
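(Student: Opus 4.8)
The plan is to decompose $M \setminus \{p\}$ into a small punctured neighborhood of the pole and its complement, bound the $L^2$-norm of $G(p,\cdot)$ on each piece, and then use the resulting square-integrable non-constant harmonic function to violate the $L^2$-Liouville property. First I would fix $p \in M$ and choose a precompact open neighborhood $\Omega$ of $p$ contained in a single normal coordinate chart. On the complement $M \setminus \Omega$, the hypothesis $\lambda_0 > 0$ lets me apply Lemma~\ref{lemma5} directly: it produces a positive Green's function $G$ and the bound $\|G(p,\cdot)\|_{L^2(M \setminus \Omega)} \le (C/\sqrt{\lambda_0})\sqrt{\mathrm{cap}(\Omega)}$, where $C = \sup_{y \in \partial\Omega} G(p,y) < \infty$ since $\partial\Omega$ is compact and disjoint from $p$, and $\mathrm{cap}(\Omega) < \infty$. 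This is the step in which positivity of the bottom of the spectrum is essential, and it controls the tail of $G(p,\cdot)$ away from the pole.

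The core estimate is on $\Omega \setminus \{p\}$, and here the dimensional restriction is decisive. The idea is that near its pole the Green's function has the same leading singularity as the Euclidean fundamental solution: writing $r = d(p,x)$, one has the upper bounds $G(p,x) \le C\, r^{2-n}$ when $n = 3$ and $G(p,x) \le C(1 + |\log r|)$ when $n = 2$, for $r$ small. I would obtain these from a standard parametrix construction for $\Delta$ in normal coordinates (equivalently, from short-time heat kernel asymptotics), noting that for the $L^2$ computation only an upper bound on $G$ is needed, not the precise asymptotics. Since the Riemannian volume element satisfies $d\mu \sim r^{n-1}\, dr\, d\theta$ as $r \to 0$ in geodesic polar coordinates, integrating $|G(p,\cdot)|^2$ over $\Omega \setminus \{p\}$ reduces, for $n = 3$, to the convergence of $\int_0^\varepsilon r^{2(2-n)} r^{n-1}\, dr = \int_0^\varepsilon r^{3-n}\, dr$, and for $n = 2$ to that of $\int_0^\varepsilon (\log r)^2\, r\, dr$. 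Both integrals converge exactly because $n \le 3$, which is precisely where dimension $\ge 4$ would fail. Combining the two regions gives $G(p,\cdot) \in L^2(M \setminus \{p\}, \mu)$.

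To conclude that the $L^2$-Liouville property fails on $M \setminus \{p\}$, I would observe that the restriction of $G(p,\cdot)$ to $M \setminus \{p\}$ is smooth and satisfies $\Delta G(p,\cdot) = 0$ there, the defining singularity being supported only at the excised point $p$, and that it is non-constant since it is positive and blows up as $x \to p$. As $M \setminus \{p\}$ is a manifold without boundary, Corollary~\ref{manifold_corollary}(1) identifies $\ker T^*$ with the space of $L^2$ harmonic functions, so $G(p,\cdot)$ is a non-constant element of $\ker T^*$ and the $L^2$-Liouville property fails by definition. The main obstacle is the local estimate near the pole: one must rigorously justify the singularity bound $G(p,x) \le C\, r^{2-n}$ (respectively the logarithmic bound for $n = 2$), either by citing parametrix or heat kernel asymptotics for the Green's function on a general manifold or by comparing with the Euclidean model in normal coordinates and controlling the error from the curvature of the metric. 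The far-field bound from Lemma~\ref{lemma5} and the final Liouville deduction are, by contrast, routine.
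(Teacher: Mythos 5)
Your proposal is correct and follows essentially the same route as the paper: the same decomposition of $M \setminus \{p\}$ into a precompact neighborhood of the pole and its complement, Lemma~\ref{lemma5} for the far-field $L^2$ bound, and the standard $r^{2-n}$ (resp.\ logarithmic) singularity of the Green's function near the pole for the local bound, which the paper simply cites to \cite{G} while you sketch its justification via a parametrix in normal coordinates. The only cosmetic slip is attributing the identification of $\ker T^*$ with $L^2$ harmonic functions to Corollary~\ref{manifold_corollary}(1); that identification is the remark on distributional harmonicity in Section~\ref{Preliminary results}, not the corollary itself.
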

\begin{proof} Let $p \in M$ and let $r>0$ be such that $B_r(p)$ is a precompact open set.
If $\lambda_0>0$, then  
the positive Green's function $G$ with pole at $p$ exists and satisfies
 $G(p,\cdot) \in L^2(M \setminus B_r(p))$ by Lemma \ref{lemma5}.
This together with the standard fact that $G(p,\cdot) \in L^2(B_r(p) \setminus \{p\})$
if the dimension of $M$ is 2 or 3, see \cite{G}, allows us to conclude that $G(p,\cdot) \in L^2(M \setminus \{p\})$.
\end{proof}

In contrast, in the rest of the section we will show that if the dimension is 4 or greater, then the $L^2$-Liouville property
always holds on a manifold with a point removed.

\begin{theorem}\label{922theorem7}
Let $M$ be a connected complete Riemannian manifold $M$ without boundary.
If the dimension of $M$ is at least 4, then the $L^2$-Liouville property holds on $M \setminus \{p\}$ 
 for any $p \in M$. 
\end{theorem}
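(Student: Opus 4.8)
The plan is to reduce the statement to the classical $L^2$-Liouville theorem on complete manifolds by showing that, precisely because $\dim M \ge 4$, every $L^2$ harmonic function on $M \setminus \{p\}$ has a removable singularity at $p$. Concretely, let $u$ be smooth and harmonic on $M \setminus \{p\}$ with $u \in L^2(M \setminus \{p\}, \mu)$; the goal is to prove that $u$ is constant. The two steps will be a local removable-singularity analysis at $p$, followed by an appeal to the Liouville property on the complete manifold $M$.

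First I would localize near the puncture. Fix a small geodesic ball $B_r(p)$ on which the metric is comparable, in normal coordinates, to the Euclidean one, so that the Green's function with pole at $p$ behaves like $d(\cdot,p)^{2-n}$ where $n = \dim M$. The decisive integrability computation is that $d(\cdot,p)^{2-n} \in L^2(B_r(p))$ if and only if $n<4$, since $\int_0^r s^{2(2-n)} s^{n-1}\, ds = \int_0^r s^{3-n}\, ds$ converges exactly when $n<4$; the same threshold rules out every singular harmonic mode $\sim d(\cdot,p)^{-(k+n-2)}$, $k\ge 0$, once $n\ge 4$. Expanding $u$ near $p$ into a regular part and a singular part (spherical-harmonic modes, with curvature corrections controlled on the small ball), the hypothesis $u \in L^2$ together with $n\ge 4$ forces the entire singular part to vanish. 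Hence $u$ extends to a function $\tilde u$ that is harmonic across $p$ on all of $M$, and $\tilde u = u$ almost everywhere, so $\tilde u \in L^2(M,\mu)$.

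The second step is then immediate: since $M$ is complete, the Laplacian with domain $C_c^\infty(M)$ is essentially self-adjoint \cite{C, S}, so by part (1) of Corollary~\ref{manifold_corollary} (equivalently, by Yau's theorem \cite{Y}) every harmonic function in $L^2(M,\mu)$ is constant. Therefore $\tilde u$, and hence $u$, is constant, which is the asserted $L^2$-Liouville property on $M \setminus \{p\}$.

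The main obstacle is the removable-singularity step on a curved manifold under only an $L^2$ (rather than $W^{1,2}$) hypothesis. I would make it rigorous either through the spherical-harmonic expansion sketched above, carefully bounding the metric perturbation terms on $B_r(p)$, or through a capacity argument: a single point carries zero $W^{1,2}$-capacity once $n\ge 2$, so harmonicity extends across $p$ as soon as one establishes $u \in W^{1,2}_{\mathrm{loc}}$ near $p$. Upgrading the $L^2$ bound to a local finite-energy bound near the isolated singularity, for instance via a Caccioppoli estimate with cutoffs adapted to the puncture whose energy is negligible when $n\ge 2$, is the delicate point; this is exactly where the dimension enters, and it mirrors in reverse the computation in Theorem~\ref{t:NoLiouv_manifolds} showing $G(p,\cdot) \in L^2$ near $p$ when $n$ is $2$ or $3$. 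As the excerpt notes, one may instead bypass the direct argument entirely by invoking that the Laplacian on $M \setminus \{p\}$ is essentially self-adjoint when $\dim M \ge 4$ \cite{CdV, M} and applying Corollary~\ref{manifold_corollary}(1) directly.
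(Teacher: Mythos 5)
Your overall strategy --- show the singularity at $p$ is removable and then invoke the $L^2$-Liouville theorem on the complete manifold $M$ --- is exactly the paper's. The gap is in how you pass from the bare $L^2$ hypothesis to removability. Your integrability computation ($r^{2-n}\in L^2$ near $p$ iff $n<4$) is the right heuristic, but a decomposition of $u$ into regular and singular spherical-harmonic modes is not something you can simply write down on a curved manifold, and your alternative route (zero capacity of a point plus a Caccioppoli estimate to get $u\in W^{1,2}_{\mathrm{loc}}$ near $p$) is precisely the step you concede is delicate: with only an $L^2$ bound, the cutoff term $\int u^2\av{\nabla\eta_\ve}^2$ on the annulus $B_{2\ve}(p)\setminus B_\ve(p)$ is of order $\ve^{-2}\int_{B_{2\ve}}u^2$, which need not stay bounded, so the energy upgrade does not close. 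The paper's device for this step is different and cleaner: since the Ricci curvature is bounded on $B_1(p)$, the Li--Schoen mean value inequality applied on balls $B_{r/2}(x)$ for $x\in\partial B_r(p)$ converts the $L^2$ hypothesis directly into the pointwise decay $u(x)=o\left(r(x)^{-n/2}\right)$ (Lemma~\ref{922lemma9}); for $n\ge 4$ one has $n/2\le n-2$, so $u=o\left(r^{2-n}\right)$, which is exactly the classical growth criterion for a removable singularity of a harmonic function (Lemma~\ref{922lemma8}, via Han--Lin). That pointwise mean-value step is the one idea your main argument is missing.

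Your final fallback --- cite \cite{CdV,M} for essential self-adjointness of the Laplacian on $M\setminus\{p\}$ when $\dim M\ge 4$ and apply Corollary~\ref{manifold_corollary}(1) --- is a complete and correct proof, and it is precisely the alternative the paper records in the remark following the theorem. If you take that route the direct argument is unnecessary; but as a self-contained proof, your primary route needs the mean-value-inequality step (or an equivalent way of extracting pointwise decay from the $L^2$ bound) filled in.
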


We prove the theorem through a series of lemmas.
For a fixed point $p \in M$, we write $r(x)=d(p,x)$ for the distance from $x \in M$ to $p$. 
For the following two lemmas, we assume
that we are on an $n$-dimensional connected complete Riemannian manifold without boundary.
\begin{lemma} \label{922lemma8}
For a harmonic function $u$ on $B_1(p) \setminus \{p\}$, if
\[u(x) =
\begin{cases}
o \left( {1\over r(x)^{n-2}} \right), & n\ge 3 \\
o \left( - \log (r(x)) \right), & n=2
\end{cases} 
\]
as $r(x) \to 0$,
then $u$  can be extended to $B_1(p)$ on which $u$ is harmonic.
\end{lemma}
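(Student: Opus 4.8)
The plan is to prove a removable singularity statement of the classical type: the growth hypothesis forces $u$ to be dominated near $p$ by an arbitrarily small multiple of the fundamental solution, and a maximum principle comparison then shows that $u$ already coincides near $p$ with the solution of a Dirichlet problem that is regular across $p$. First I would fix $\rho \in (0,1)$ small enough that $\overline{B_\rho(p)}$ lies in a geodesic normal coordinate chart and the Dirichlet problem on $B_\rho(p)$ is solvable. Let $v$ be the harmonic function on $B_\rho(p)$, continuous up to $\overline{B_\rho(p)}$, with $v = u$ on $\partial B_\rho(p)$, and set $w = u - v$ on $B_\rho(p) \setminus \{p\}$. Then $w$ is harmonic there, continuous up to $\partial B_\rho(p)$, and $w = 0$ on $\partial B_\rho(p)$. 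Since $v$ is bounded on $\overline{B_\rho(p)}$, the hypothesis transfers verbatim from $u$ to $w$: we still have $w(x) = o(r(x)^{-(n-2)})$ for $n \ge 3$ and $w(x) = o(-\log r(x))$ for $n = 2$ as $r(x) \to 0$.

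Next I would introduce the comparison function $g := G_\rho(p, \cdot)$, the Dirichlet Green's function of $B_\rho(p)$ with pole at $p$. This $g$ is positive and harmonic on $B_\rho(p) \setminus \{p\}$, vanishes on $\partial B_\rho(p)$, and — this is the key local input — satisfies $g(x) \asymp r(x)^{-(n-2)}$ for $n \ge 3$ and $g(x) \asymp -\log r(x)$ for $n = 2$ as $x \to p$. Comparing asymptotics, the hypothesis gives $w(x)/g(x) \to 0$ as $x \to p$, so for every $\varepsilon > 0$ there is $\delta_0 > 0$ with $|w| \le \varepsilon g$ on $B_{\delta_0}(p) \setminus \{p\}$, in particular on $\partial B_\delta(p)$ for all $\delta < \delta_0$.

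I would then run the maximum principle on the annulus $A_\delta = B_\rho(p) \setminus \overline{B_\delta(p)}$ for $0 < \delta < \delta_0$. Both $w$ and $g$ are harmonic in $A_\delta$ and continuous up to its boundary; on the outer boundary $\partial B_\rho(p)$ both vanish, while on the inner boundary $\partial B_\delta(p)$ we have $|w| \le \varepsilon g$. Applying the maximum principle to the harmonic functions $w - \varepsilon g$ and $w + \varepsilon g$ yields $|w| \le \varepsilon g$ throughout $A_\delta$. Fixing $x \neq p$ and choosing $\delta < \min(r(x), \delta_0)$ gives $|w(x)| \le \varepsilon g(x)$, and letting $\varepsilon \to 0$ forces $w(x) = 0$. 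Hence $u \equiv v$ on $B_\rho(p) \setminus \{p\}$; defining $u(p) := v(p)$ extends $u$ harmonically across $p$, and since $u$ was already harmonic on $B_1(p) \setminus \{p\}$, the resulting function is harmonic on all of $B_1(p)$.

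The main obstacle is the local asymptotic estimate $g(x) \asymp r(x)^{-(n-2)}$ (resp. $-\log r(x)$) for the ball's Green's function on a Riemannian manifold rather than in Euclidean space. I expect to establish it by passing to geodesic normal coordinates, in which the metric is a smooth perturbation of the Euclidean one, so that the leading singularity of $g$ matches that of the Euclidean fundamental solution and the error terms are of strictly lower order and do not affect the comparison. Everything else — solvability of the Dirichlet problem on the small geodesic ball and the maximum principle on the annulus — is standard elliptic theory.
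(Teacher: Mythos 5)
Your argument is correct and is essentially the same as the paper's: the paper simply declares the lemma standard and refers to the proof of Theorem 1.28 in Han--Lin, which is exactly the comparison you carry out (subtract the harmonic extension of the boundary data, dominate the difference by $\varepsilon$ times the Green's function via the maximum principle on annuli, and let $\varepsilon \to 0$). Your added remark on transferring the Euclidean asymptotics of the fundamental solution to the geodesic ball's Green's function in normal coordinates is the right way to justify the adaptation to the manifold setting.
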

\begin{proof}
This is standard.
One can apply the same argument as in the proof of Theorem 1.28 in \cite{HL}.
\end{proof}
\begin{lemma}\label{922lemma9}
For a harmonic function $u$ on $B_1(p) \setminus \{p\}$, if $u \in L^2(B_1(p) \setminus \{p\})$,
then
\[u(x) = o \left( {1 \over r(x)^{n/2}} \right) \textup{ as } r(x) \to 0.\]
\end{lemma}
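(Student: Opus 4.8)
The plan is to read off the pointwise bound from a local mean-value inequality combined with the absolute continuity of the $L^2$ integral near $p$. The heuristic is that an $L^2$ harmonic function cannot be large at a point $x$ near $p$, because its value there is controlled by its $L^2$ mass on a small ball centered at $x$, and that mass tends to $0$ as $x \to p$.

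First I would fix $x \in B_1(p) \setminus \{p\}$ close to $p$ and set $\rho = r(x)/2$. By the triangle inequality $p \notin \overline{B_\rho(x)}$ and $B_\rho(x) \subset B_{2r(x)}(p) \setminus \{p\}$, so $u$ is harmonic on a neighborhood of $\overline{B_\rho(x)}$. Since $\Delta(u^2) = -2|\nabla u|^2 \le 0$, the function $u^2$ is subharmonic there, and the sub-mean-value inequality for subharmonic functions on a manifold provides a constant $C$, depending only on the local geometry, with
\[
u(x)^2 \le \frac{C}{\mu(B_\rho(x))} \int_{B_\rho(x)} u^2 \, d\mu.
\]
Next I would use the smoothness of the metric to obtain a volume lower bound: on a fixed precompact neighborhood of $p$ there is $c>0$ with $\mu(B_\rho(x)) \ge c\,\rho^n$ for all small $\rho$. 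Combined with the containment above this gives
\[
u(x)^2 \le \frac{C'}{r(x)^n} \int_{B_{2r(x)}(p) \setminus \{p\}} u^2 \, d\mu.
\]

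Setting $\varepsilon(r) = \int_{B_{2r}(p) \setminus \{p\}} u^2 \, d\mu$, absolute continuity of the integral, valid since $u \in L^2(B_1(p) \setminus \{p\})$ and $\mu(B_{2r}(p)) \to 0$ as $r \to 0$, yields $\varepsilon(r) \to 0$. Hence $r(x)^n u(x)^2 \le C' \varepsilon(r(x)) \to 0$ as $r(x) \to 0$, which is exactly the asserted estimate $u(x) = o\left( r(x)^{-n/2} \right)$.

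The hard part will be making the constant $C$ in the mean-value inequality and the volume lower bound $\mu(B_\rho(x)) \ge c\,\rho^n$ uniform as $x \to p$, that is, uniform in the center and in $\rho$ for $\rho$ small. This is where the smoothness (and completeness, ensuring a genuine neighborhood exists) of $(M,g)$ enters: all balls involved lie in a single precompact coordinate chart about $p$, on which the metric coefficients and their derivatives are bounded, so $\Delta$ is a uniformly elliptic operator with smooth coefficients. The local De~Giorgi--Nash--Moser $L^2$-to-$L^\infty$ estimate, equivalently the sub-mean-value inequality, then holds with a constant independent of the center, and the same chart delivers the two-sided volume comparison. Alternatively, one may transplant the equation into normal coordinates and invoke the Euclidean mean-value property for the resulting uniformly elliptic equation directly.
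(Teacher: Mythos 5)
Your argument is correct and is essentially the paper's proof: a mean value (local $L^2$-to-$L^\infty$) inequality on the ball $B_{r(x)/2}(x)$, which avoids $p$ and sits inside $B_{2r(x)}(p)\setminus\{p\}$, combined with the volume lower bound $\mu(B_{r(x)/2}(x))\gtrsim r(x)^n$ and absolute continuity of the integral of $u^2$. The only cosmetic difference is that the paper justifies the uniform mean value inequality by citing Li--Schoen under the bounded Ricci curvature on $B_1(p)$, whereas you obtain it from uniform ellipticity in a fixed chart; both are standard and lead to the same estimate.
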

\begin{proof} Note that the Ricci curvature is uniformly bounded on $B_1(p)$. 
Thus, there exists $r_0>0$ such that for any $r < r_0$, the mean value inequality holds for $u$ \cite{LS}.
Therefore, for any $x \in \partial B_r(p)$,
\[
u^2(x) \le {C \over \textrm{vol} \left( B_{r/2}(x) \right)} \int_{B_{r/2}(x)} u^2 d\mu
\le {C \over r^n} \int_{B_{2r}(p) \setminus \{p\}} u^2  d\mu = o \left( {1 \over r^n} \right) \textup{ as } r \to 0
\]
where the last assertion follows from the assumption that $u \in L^2(B_1(p) \setminus \{p\})$. 
This proves the lemma.
\end{proof}

We now combine the above lemmas to prove our second theorem of this section.
\begin{proof}[Proof of Theorem \ref{922theorem7}]
Let $u$ be a harmonic function on $M \setminus \{p\}$ such that $u \in L^2(M \setminus \{p\})$.
By Lemma \ref{922lemma9}, 
\[  u(x) = o \left( {1 \over r(x)^{n/2}} \right) \textup{ as } r(x) \to 0.\]
For $n \ge 4$, this yields
\[  u(x) = o \left( {1 \over r(x)^{n-2}} \right) \textup{ as } r(x) \to 0.\]
Hence, $u$ can be extended to a harmonic function on $B_1(p)$ by  Lemma \ref{922lemma8}
and thus to an $L^2$ harmonic function on $M$. By the fact that the manifold is complete, $u$ is constant \cite{Y}.
This proves the theorem.
\end{proof}

\begin{example}
The Laplacian on the hyperbolic space $\mathbb{H}^n$ has $\lambda_0>0$ and 
does not satisfy the $L^2$-Liouville property on $\mathbb{H}^n \setminus \{p\}$ for $n=2,3$ by Theorem \ref{t:NoLiouv_manifolds}.
Thus, we see that the dimension assumption in Theorem~\ref{922theorem7} is necessary.
In contrast, the Laplacian on $\mathbb R^n \setminus \{0\}$ satisfies the $L^2$-Liouville property for any $n \ge 1$, see \cite{ABR}.
\end{example}

\begin{remark}
By combining Corollary \ref{manifold_corollary} and 
the fact that for a complete manifold $M$ with no boundary the Laplacian on $M \setminus \{p\}$
is essentially self-adjoint if and only if 
dim $M \ge 4$, see \cite{M}, we can prove both:
\begin{itemize}
\item the failure of $L^2$-Liouville property under the assumptions of Theorem \ref{t:NoLiouv_manifolds}
if the manifold additionally has infinite measure,
\item the result found in Theorem \ref{922theorem7}. 
\end{itemize}
\end{remark}


\section{The $\ell^2$-Liouville property of the Laplacian on $X \setminus \{o\}$}\label{s:Greens_graphs}
In the previous section, we considered the case of a manifold with a point removed. 
In particular, we showed that the dimension
is key to the failure of the $L^2$-Liouville property
when the manifold is made metrically incomplete via the removal of a point. The main tool to show this
was an estimate on the Green's function in terms of the capacity in the case of positive bottom of the spectrum.

In this section, we follow a similar development for graphs. However, we note that
neither dimension nor what to do following the removal of a point are well-established ideas for graphs.
In particular, as we do not assume local finiteness, we note that the removal of a vertex
can result in a graph with infinitely many connected components. The idea we follow here is that, at the
neighbors of the removed vertex, we attach infinite paths which have finite length, thus mimicking the 
manifold setting where the removal of a point results in an incomplete manifold.

We start with the definition of a capacity for a set. We note that this is distinct from the capacity
introduced in Section~\ref{s:examples} in the context of the Cauchy boundary of a graph. 
Let $b$ be a graph over $(X,m)$.
For a finite set $\Om \subseteq X$, we let
$$\cp(\Om) = \inf_{\vp \in C_c(X), \varphi _{\vert \Om} =1} \ms{Q}(\varphi)$$
where $\ms{Q}$ is the energy form of $b$.

We first show that for a finite set $\Om \subseteq X$, the capacity can be achieved via an exhaustion
technique. More specifically, we call a sequence of finite subsets $(K_n)$ of $X$ an exhaustion
sequence of $X$ if $b$ restricted to $K_n$ gives a connected graph, $K_n \subseteq K_{n+1}$
for all $n \in \N$ and $X = \bigcup K_n$. With these notions we state the following result which should
be known to experts. However, we briefly sketch the proof for the convenience of the reader.
\begin{lemma}\label{l:cap_exhaustion}
Let $b$ be a connected graph over $(X,m)$. 
If $\Om \subseteq X$ is finite and $(K_n)$ is an exhaustion sequence of $X$
with $\Om \subseteq K_1$, then
$$\cp(\Om) = \lim_{n \to \infty} \ms{Q}(\varphi_n)$$
where $\varphi_n \in C_c(X)$ satisfies
$$\begin{cases}
\ms{L} \varphi_n(x) =0, & x \in K_n \setminus \Om \\
\vp_n(x)=1, & x \in \Om \\
 \vp_n(x) = 0, & x \in X \setminus K_n \\
\end{cases}$$
for $n \in \N$.
\end{lemma}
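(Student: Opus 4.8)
The plan is to identify $\vp_n$ as the energy-minimizing equilibrium potential over an appropriate admissible class and then deduce the equality from the Dirichlet principle together with a standard monotonicity/exhaustion argument. For each $n$ I would introduce the admissible class
$$\ms{A}_n = \{ \vp \in C_c(X) \mid \vp|_\Om = 1 \textup{ and } \vp|_{X \setminus K_n} = 0 \}$$
and show that the function $\vp_n$ appearing in the statement is precisely the unique minimizer of $\ms{Q}$ over $\ms{A}_n$. Existence and uniqueness are finite-dimensional: only the finitely many values $\vp(x)$ for $x \in K_n \setminus \Om$ are free, and $\ms{Q}$ is a positive semi-definite quadratic form in them. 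It is in fact positive definite on the space of variations, since a function vanishing on $\Om \cup (X \setminus K_n)$ with zero energy is constant on the connected set $K_n$ (zero energy forces equality across every edge), hence identically zero as it vanishes on $\Om$. This gives a unique minimizer.

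Next I would derive the harmonicity condition from first-order optimality. Testing against variations $\eta \in C_c(X)$ supported in $K_n \setminus \Om$ keeps $\vp_n + t\eta$ admissible, and the Green's formula for $\ms{Q}$ on finitely supported functions (established in Section~\ref{s:examples}) gives $\frac{d}{dt}\big|_{t=0}\ms{Q}(\vp_n + t\eta) = 2\as{\ms{L}\vp_n, \eta} = 0$ for all such $\eta$, which forces $\ms{L}\vp_n(x)=0$ for $x \in K_n \setminus \Om$. Hence the minimizer coincides with the function in the statement and $\ms{Q}(\vp_n) = \min_{\vp \in \ms{A}_n} \ms{Q}(\vp)$. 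Since $K_n \subseteq K_{n+1}$ yields $\ms{A}_n \subseteq \ms{A}_{n+1}$, the minima are non-increasing, so $\ms{Q}(\vp_n)$ decreases and, being bounded below by $0$, converges.

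For the two-sided inequality: each $\vp_n$ lies in $C_c(X)$ with $\vp_n|_\Om = 1$, so it competes in the definition of $\cp(\Om)$, giving $\ms{Q}(\vp_n) \ge \cp(\Om)$ and thus $\lim_n \ms{Q}(\vp_n) \ge \cp(\Om)$. Conversely, any competitor $\psi \in C_c(X)$ with $\psi|_\Om = 1$ has finite support contained in some $K_N$, so $\psi \in \ms{A}_N$ and therefore $\lim_n \ms{Q}(\vp_n) \le \ms{Q}(\vp_N) \le \ms{Q}(\psi)$; taking the infimum over $\psi$ gives $\lim_n \ms{Q}(\vp_n) \le \cp(\Om)$, and combining the two bounds proves the claim. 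The one genuinely non-formal step, and the main obstacle, is the variational characterization in the first two paragraphs: existence and uniqueness of $\vp_n$ and the equivalence of optimality with $\ms{L}\vp_n=0$ off $\Om$, which rely on connectivity of $K_n$ for strict convexity and on the Green's formula for $\ms{Q}$; the nesting and monotonicity argument is then routine.
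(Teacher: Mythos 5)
Your proof is correct and follows essentially the same route as the paper: identify $\vp_n$ as the unique energy minimizer over the finite-dimensional admissible class, get monotonicity of $\ms{Q}(\vp_n)$ from the nesting $K_n \subseteq K_{n+1}$, and obtain the two-sided inequality by noting that each $\vp_n$ competes for $\cp(\Om)$ while every competitor is supported in some $K_N$. The only cosmetic difference is that you derive existence/uniqueness from strict convexity on the space of variations (using connectedness of $K_n$) and first-order optimality, whereas the paper invokes a maximum principle for uniqueness, which it also uses to obtain $0 \le \vp_n \le \vp_{n+1} \le 1$ for later use.
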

\begin{proof}
For every $n \in \N$, we let
$$C(K_n, \Om) = \{ \vp \in C(K_n) \ | \ \vp_{\vert \Om}=1 \}$$
and extend any function in $C(K_n, \Om)$ by zero so that the function is defined on $X$.
As $C(K_n, \Om)$ is a finite dimensional space,
the energy form $\ms{Q}$ has a unique minimizer on $C(K_n, \Om)$ which we denote
by $\vp_n$ and which, via the Green's formula,
will satisfy the system of equations given in the statement. Clearly, we have
$\ms{Q}(\vp_{n+1}) \leq \ms{Q}(\vp_n)$ for all $n \in \N$.

A maximum principle for harmonic functions such as Lemma 1.39 in \cite{G18}
gives uniqueness of $\vp_n$ and 
will also imply
$$ 0 \leq \vp_n \leq 1 \qquad \textup{ and } \qquad  \vp_n \leq \vp_{n+1}$$
for all $n \in \N$. We note, in particular, that this can be used to establish the independence
of the limiting energy on the exhaustion sequence. Furthermore, for every $n \in \N$, we clearly have
$\cp(\Om) \leq \ms{Q}(\vp_n)$
so that 
$$\cp(\Om) \leq \liminf_{n \to \infty} \ms{Q}(\vp_n).$$ 

On the other hand, for every $\ve>0$,
there exists $\vp_\ve \in C_c(X)$ such that ${\vp_\ve}_{\vert \Om}=1$ and
$$\ms{Q}(\vp_\ve) \leq \cp(\Om) +\ve$$
Now, since $\vp_\ve \in C_c(X)$, there exists some $N$ such that $\vp_\ve$ is supported on $K_N$ and thus
$$\ms{Q}(\vp_\ve) \geq \ms{Q}(\vp_N) \geq \ms{Q}(\vp_n)$$
for all $n \geq N$. Therefore, for every $\ve>0$, we have
$$ \limsup_{n \to \infty}\ms{Q}(\vp_n) \leq \cp(\Om)+\ve$$
Combining inequalities and letting $\ve \to 0$ gives the result.
\end{proof}

The function $\vp_n$ constructed in Lemma~\ref{l:cap_exhaustion} above
is called the equilibrium potential of $\Om$ in $K_n$. These functions will be used below
to estimate the Green's function which will be introduced next.

We let $e^{-tL}$ denote the heat semigroup associated
to the Laplacian $L$ acting on $\ell^2(X,m)$. We then let $p: X \times X \times [0,\infty) \longrightarrow \R$
denote the heat kernel which is defined via
$$e^{-t L}f(x)=\sum_{y \in X}p_t(x,y) f(y)m(y)$$
for all $x \in X$, $t \geq 0$ and $f \in \ell^2(X,m)$. 
Finally, we let $G:X\times X \longrightarrow [0,\infty]$ denote the Green's
function defined via
$$G(x,y) = \int_0^\infty p_t(x,y) dt$$
for $x, y \in X$.

If $b$ over $(X,m)$ is connected and $G(x,y) < \infty$ for one pair of vertices $x, y \in X$, then we have
$G(x,y) < \infty$ for all pairs $x,y \in X$. In this case, we say that the graph is transient, otherwise we say that the graph
is recurrent. 
We note that the Green's function on a graph, and thus transience and recurrence, 
are often defined for random walks with a discrete
time parameter. However, this is equivalent to using continuous time, see \cite{Sch}.
By standard theory, transience is equivalent to the fact that $\cp(x)>0$
for some (equivalently, all) $x \in X$. Furthermore, whenever the bottom of the spectrum is positive, i.e.,
$$\lambda_0(L) = \inf_{\vp \in C_c(X)} \frac{\ms{Q}(\vp)}{\| \vp \|^2}>0,$$ the graph is transient,
see \cite{Sch, Soa, W, Woe} for further details.

We assume that $b$ is transient,
fix $o \in X$ and let $g \in C(X)$ be defined by $g(x)=G(o, x)$ for $x \in X$.
We then calculate that
$$\ms{L}g=\oh{1}_{o}$$
where $\oh{1}_{o} = 1_{o}/m(o)$ and $1_{o}$ is the characteristic function of the set $\{o\}$.
In particular, $g$ is a function which is harmonic everywhere except for a single vertex. As in the previous section,
we now discuss under which conditions $g$ is additionally in the Hilbert space $\ell^2(X,m)$. We then 
give a procedure for creating a harmonic function using $g$ on a graph over the vertex set $X \setminus \{o\}$.

In the following, we will need the standard fact that for any exhaustion sequence $(K_n)$, if we denote
the Laplacian resulting from restricting $\ms{L}$ to $\ell^2(K_n, m)$ by $L_n$ and the resulting heat
kernel by $p_t^n(x,y)$ for all vertices $x, y \in X$ and $t \geq0$, then
$$p_t^n(x,y) \to p_t(x,y)$$
as $n \to \infty$, see \cite{KL}. In particular, this implies
$$g_n(x) \to g(x)$$
for all $x \in X$ as $n \to \infty$ where $g_n(x) = \int_0^\infty p_t^n(o, x) dt$.
We note that for any $x \not \in K_n$, we have $p_t^n(o,x) =0 $ and thus $g_n(x)=0$.

For a set $\Om \subseteq X$, we let 
$$\partial \Om = \{ x \in \Om \ | \ \exists y \sim x, y \not \in \Om\}$$
denote the boundary of the set.
For a graph with positive bottom of the spectrum, we now show that the Green's function
is in $\ell^2(X,m)$. We note that, in contrast to the manifold case above, the Green's function
does not have a singularity at $o$. 

\begin{lemma}\label{l:greens_l^2}
Let $b$ be a connected graph over $(X,m)$.
Let $o \in X$ and let $\Om \subseteq X$ be finite with $o \in \Om$. Let $g(x) = G(o, x)$ for $x \in X$
and $C= \sup_{x \in \partial \Om} g(x)$ where $G$ denotes the Green's function.
Let $\lambda_0 = \lambda_0(L)$ denote the bottom of the spectrum of the Laplacian $L$.
If $\lambda_0 >0$, then 
$$\|g\|_{\ell^2(X \setminus \Om, m)} \leq \frac{C}{\sqrt{\lambda_0}} \sqrt{\cp(\Om)}.$$
In particular, $g \in \ell^2(X,m)$.
\end{lemma}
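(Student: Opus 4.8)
The plan is to transport the proof of Lemma~\ref{lemma5} to the graph setting, with the finite-domain Green's functions $g_n$ playing the role of the $G_n$ and the equilibrium potentials $\vp_n$ from Lemma~\ref{l:cap_exhaustion} replacing the continuous equilibrium potentials. Fix an exhaustion sequence $(K_n)$ with $\Om \subseteq K_1$, set $g_n(x) = \int_0^\infty p_t^n(o,x)\, dt$, and let $\vp_n$ be the equilibrium potential of $\Om$ in $K_n$. First I would record the standard bookkeeping: $g_n$ vanishes off $K_n$; since $g_n = L_n^{-1}\oh{1}_o$ one has $\ms{L}g_n = \oh{1}_o$ on $K_n$, so that $g_n$ is harmonic on $K_n \setminus \Om$ because $o \in \Om$; and the heat-kernel monotonicity behind the convergence $p_t^n \to p_t$ cited above gives $0 \le g_n \le g_{n+1} \le g$ together with $g_n \to g$ pointwise.

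The heart of the matter is the pointwise comparison $g_n \le C\vp_n$ on $X \setminus \Om$. Both $g_n$ and $C\vp_n$ are harmonic on the finite set $K_n \setminus \Om$, so I would apply a maximum principle, such as Lemma~1.39 in \cite{G18}, to the harmonic function $C\vp_n - g_n$ on each connected component of $K_n \setminus \Om$. The boundary data are favorable: at a vertex $x \in \partial\Om$ one has $\vp_n(x) = 1$ and $g_n(x) \le g(x) \le C$, while at a vertex of $X \setminus K_n$ both $g_n$ and $\vp_n$ vanish. The maximum principle then forces $C\vp_n - g_n \ge 0$ throughout $K_n \setminus \Om$, and the inequality holds trivially on $X \setminus K_n$ where both sides are zero.

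Granting the comparison, the estimate follows quickly. Summing over $X \setminus \Om$ gives $\|g_n\|_{\ell^2(X \setminus \Om, m)} \le C\|\vp_n\|_{\ell^2(X \setminus \Om, m)} \le C\|\vp_n\|$, and the variational characterization of $\lambda_0$ yields $\lambda_0\|\vp_n\|^2 \le \ms{Q}(\vp_n)$, so that $\|g_n\|_{\ell^2(X \setminus \Om, m)} \le (C/\sqrt{\lambda_0})\sqrt{\ms{Q}(\vp_n)}$. Letting $n \to \infty$, the left-hand side increases to $\|g\|_{\ell^2(X \setminus \Om, m)}$ by monotone convergence, while $\ms{Q}(\vp_n) \to \cp(\Om)$ by Lemma~\ref{l:cap_exhaustion}, which gives the stated bound. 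Since $\Om$ is finite and transience (guaranteed by $\lambda_0 > 0$) makes $g$ finite at every vertex, the part of $g$ supported on $\Om$ contributes only a finite sum to the norm, and hence $g \in \ell^2(X,m)$.

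The step I expect to be the main obstacle is the comparison: one has to confirm that $g_n$ is genuinely harmonic on $K_n \setminus \Om$ and set up the boundary values correctly, and because $K_n \setminus \Om$ need not be connected the maximum principle must be applied componentwise. The monotonicity $g_n \le g$, which is precisely what bounds $g_n$ by $C$ on $\partial\Om$, rests on the monotonicity of the Dirichlet heat kernels along the exhaustion, which I would extract from the convergence $p_t^n \to p_t$ stated before the lemma.
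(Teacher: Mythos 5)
Your proposal is correct and follows essentially the same route as the paper's proof: the same exhaustion with equilibrium potentials, the same maximum-principle comparison $g_n \leq C\vp_n$ on $K_n \setminus \Om$, the variational bound via $\lambda_0$, and monotone convergence combined with Lemma~\ref{l:cap_exhaustion}. Your additional care with the boundary data and the componentwise application of the maximum principle only makes explicit what the paper leaves implicit.
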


\begin{proof}
Let $(K_n)$ be an exhaustion sequence of $X$ such that $\Om \subseteq K_1$. Let $\vp_n$
denote the equilibrium potential of $\Om$ in $K_n$ discussed in Lemma~\ref{l:cap_exhaustion} 
and let $g_n(x)=G_n(o,x)$ denote the Green's function
on $K_n$. As $g_n \leq g$, we note that $g_n \leq C\vp_n$ on $\partial \Om$
and since $\ms{L} \vp_n = \ms{L} g_n = 0$ on $K_n \setminus \Om$ and $\vp_n = g_n =0$ outside of $K_n$,
it follows that
$$g_n \leq C\vp_n$$ 
by the maximum principle for harmonic functions, see
Lemma 1.39 in \cite{G18}. Therefore, using the definition of $\lambda_0$ and Lemma~\ref{l:cap_exhaustion}
we obtain
\begin{align*}
\|g_n\|^2_{\ell^2(K_n \setminus \Om)} &\leq C^2 \|\vp_n\|^2_{\ell^2(K_n \setminus \Om)} \leq \frac{C^2}{\lambda_0}\ms{Q}(\vp_n) \to  \frac{C^2}{\lambda_0} \cp(\Om)
\end{align*}
as $n \to \infty$. As $0 \le g_n\le g_{n+1}$ and $g_n \to g$ as $n \to \infty$ on $X$, 
we apply Beppo Levi's monotone convergence theorem to get
$$\|g\|_{\ell^2(X \setminus \Om)} \leq  \frac{C}{\sqrt{\lambda_0}} \sqrt{\cp(\Om)}.$$
The final statement follows since $\Om$ is finite and $g$ is defined everywhere.
\end{proof}

Our aim is now to mimic Theorem~\ref{t:NoLiouv_manifolds} from the manifold case and
to apply Lemma~\ref{l:greens_l^2} to create examples where the $\ell^2$-Liouville property fails.
As $g$ satisfies $\ms{L}g=\oh{1}_{o}$ the basic idea is to create a new graph by removing
the vertex $o$ where $g$ fails to be harmonic. For every vertex that was adjacent to $o$ in the
original graph and for which the value of $g$ at that vertex is different than the value of $g$ at $o$, 
we then replace that vertex with a path to infinity on which we extend $g$ to be harmonic on the path. 
We note that, in general, this process will not result in a connected graph.

In the construction outlined above, we will attach paths to infinity to certain neighbors of the removed vertex.
We first analyze under which condition the extension of a non-constant harmonic function
to such a path will be in $\ell^2(X,m)$. 
We will apply this to the function based on the Green's function
in what follows below.
\begin{lemma}\label{l:harmonic_extension}
Let $X = \N_0=\{0, 1, 2, \ldots\}$ and let $b$ be a graph over $(X,m)$ with $b(j,k)>0$ if and only if $|j-k|=1$.
If $v \in C(X)$ satisfies
$$\ms{L}v(r)=0$$
for $r \geq 1$, then $v$ is uniquely determined by the choice of
$v(1)$ and $v(0)$. More specifically, if $C=b(0,1)(v(1)-v(0))$, then 
$$v(r+1) = v(1)+ C \sum_{k=1}^r \frac{1}{b(k,k+1)}$$
for $r \geq 1$.
In particular, if $C \neq 0$, then $v$ is non-constant
and if 
$$\sum_{r=1}^\infty \left( \sum_{k=1}^{r} \frac{1}{b(k,k+1)} \right)^2 m(r+1) < \infty,$$
then $v \in \ell^2(X,m)$.
\end{lemma}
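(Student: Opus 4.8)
The plan is to read the equation $\ms{L}v(r)=0$ as a \emph{constant-flux} condition along the path and then extract each assertion in turn. For an interior vertex $r\ge 1$ the only neighbors are $r-1$ and $r+1$, so after multiplying by $m(r)$ the equation becomes
$$b(r-1,r)\big(v(r)-v(r-1)\big)+b(r,r+1)\big(v(r)-v(r+1)\big)=0.$$
Introducing the flux $C_r=b(r,r+1)\big(v(r+1)-v(r)\big)$ across the edge $\{r,r+1\}$, this is precisely $C_{r-1}=C_r$. Hence $C_r$ is independent of $r$ and equals $C_0=b(0,1)\big(v(1)-v(0)\big)=C$.

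From $C_r=C$ and $b(r,r+1)>0$ I would write $v(r+1)-v(r)=C/b(r,r+1)$ for every $r\ge 1$ and telescope starting at $r=1$ to obtain
$$v(r+1)=v(1)+C\sum_{k=1}^{r}\frac{1}{b(k,k+1)},$$
which is the asserted formula; since $v(2),v(3),\dots$ are thereby fixed by $v(0)$ and $v(1)$ (through $C$), this also gives the uniqueness claim. If $C\ne 0$, then $v(1)-v(0)=C/b(0,1)\ne 0$ because $b(0,1)$ is positive and finite, so $v$ takes two distinct values and is non-constant.

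For the $\ell^2$ statement, abbreviate $S_r=\sum_{k=1}^{r}1/b(k,k+1)$, so that $v(r+1)=v(1)+CS_r$. Splitting off the two initial vertices and applying $(a+b)^2\le 2a^2+2b^2$ to the tail gives
\begin{align*}
\|v\|^2 &= v(0)^2 m(0)+v(1)^2 m(1)+\sum_{r=1}^{\infty}\big(v(1)+CS_r\big)^2 m(r+1)\\
&\le v(0)^2 m(0)+v(1)^2 m(1)+2v(1)^2\sum_{r=1}^{\infty}m(r+1)+2C^2\sum_{r=1}^{\infty}S_r^2\, m(r+1).
\end{align*}
The last sum is finite by hypothesis. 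The one genuinely delicate point, and the step I expect to be the main obstacle, is controlling the term $\sum_{r}m(r+1)$, which is not literally part of the hypothesis: a careless reader might fear that the constant part $v(1)$ of the harmonic function is uncontrolled. The resolution is that $(S_r)$ is increasing with $S_r\ge S_1=1/b(1,2)>0$ for all $r\ge1$, since $b(1,2)$ is positive and finite. Therefore $S_r^2\ge S_1^2$ and
$$\sum_{r=1}^{\infty}m(r+1)\le \frac{1}{S_1^2}\sum_{r=1}^{\infty}S_r^2\, m(r+1)<\infty,$$
so both tail sums are finite and $v\in\ell^2(X,m)$. In other words, square-summability of $(S_r)$ against $m$ already forces square-summability of the constant part, which is exactly why the conclusion holds for arbitrary $v(1)$ and not only for the purely linear solution.
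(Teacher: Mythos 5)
Your proof is correct and follows essentially the same route as the paper's: the constant-flux identity $b(r-1,r)(v(r)-v(r-1))=b(r,r+1)(v(r+1)-v(r))$ is exactly the inductive step the paper invokes, and your telescoping yields the stated formula. The only place you go beyond the paper's ``straightforward estimates'' is in spelling out why the constant part $v(1)$ causes no trouble --- namely that $S_r\ge S_1=1/b(1,2)>0$ forces $\sum_r m(r+1)<\infty$ from the hypothesis --- which is a worthwhile detail but not a different argument.
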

\begin{proof}
Let $v \in C(X)$ satisfy $\ms{L}v(r)=0$ for $r \geq 1$. Using induction, we see that
if $C=b(0,1)(v(1)-v(0))$, then we obtain 
$$v(r+1)-v(r) = \frac{C}{b(r,r+1)}$$
for all $r \geq 1$ so that
$$v(r+1) = v(1)+ C \sum_{k=1}^r \frac{1}{b(k,k+1)}.$$
From this formula, it is clear that if $C \neq 0$, then $v$
is non-constant and the summability assumption
implies that $v \in \ell^2(X,m)$ by straightforward estimates.
\end{proof}

\begin{remark}[A comment on the condition in Lemma~\ref{l:harmonic_extension}]
We comment on the summability condition $\sum_{r=1}^\infty \left( \sum_{k=1}^{r} 1/b(k,k+1) \right)^2 m(r+1) < \infty$
appearing above in Lemma~\ref{l:harmonic_extension}. As already noted, we wish to use this condition in order
to extend the Green's function to the path in such a way that the Green's function remains in $\ell^2(X,m)$.
As Lemma~\ref{l:harmonic_extension} shows, the condition on $b$ and $m$
allows the extension of a non-constant positive harmonic function to this path. 

Let us now contrast this with other conditions found for graphs with $X=\N$ and $b(j,k)>0$ if and only if $|j-k|=1$.
We note that by a direct calculation any function which is harmonic at all vertices on such a graph
is always constant. In particular, the Laplacian on all such graphs satisfies the $\ell^2$-Liouville property.
For this reason, in Lemma~\ref{l:harmonic_extension} we assume that the function is not harmonic
at the first vertex. We note if $m(\N)=\infty$, then the Laplacian on such a graph is essentially self-adjoint \cite{KL}
so from Corollary~\ref{c:graphs_esa_liouv} 
one would not expect to have non-constant square integrable harmonic functions in this case.
For $b$, the condition $\sum_k 1/b(k,k+1)<\infty$ is known to be equivalent to the 
transience of such graphs, see \cite{Woe}. Furthermore, in the case of $m(\N)<\infty$, transience, stochastic incompleteness
and the failure of form uniqueness are known to be equivalent, see \cite{GHKLW, Sch}. As the failure of form uniqueness
implies the failure of essential self-adjointness, in the case of $m(\N)<\infty$ and $\sum_k 1/b(k,k+1)<\infty$,
we have the failure of essential self-adjointness. Obviously, these two conditions imply the summability
assumption in Lemma~\ref{l:harmonic_extension} which
intertwines $b$ and $m$. In fact, it turns out that this condition
is equivalent to the failure of essential self-adjointness for the Laplacian on such graphs, see the forthcoming work \cite{IMW}.

Finally, let us note that the condition above actually forces the path to have finite length in any intrinsic path metric.
Thus, as in the manifold case, after the removal of the vertex and the addition of such a path, we end up 
with a space that is not metrically complete. We discuss these notions further below, see the discussion leading up to
and the statement of Corollary~\ref{c:incomplete}.
\end{remark}

We now make precise a way to construct a graph following the removal of a vertex. 
We let $b$ be a transient graph over $(X,m)$, 
let $g(x) = G(o,x)$ where $G$ is the Green's function
of the graph and $o\in X$. We then decompose the set of neighbors of $o$ into
$$N_{o}=\{x \in X \ | \ x \sim o, g(x) \neq g(o) \} \quad \textup{and} \quad N_{c} = \{x \in X \ | \ x \sim o, g(x) = g(o) \}.$$
We ultimately will attach paths to infinity to vertices in $N_o$ and a single additional vertex to
vertices in $N_c$. Then, the Green's function can be extended by Lemma~\ref{l:harmonic_extension}
to the paths which are attached to vertices in $N_o$ and by a constant to the new vertices
which are neighbors of vertices in $N_c$.

We note that $\ms{L}g(o) = 1/m(o)$ implies that $N_o$
is non-empty.
On the other hand, $N_o$ in general will not contain all of the neighbors of $o$. 
For example, if $x \sim o$ is a vertex such that all vertices that have a non-repeating path
that connects them to $o$ must contain $x$ and this set is finite,
then $x \in N_c$.

Now, for every $x \in N_o$, we let 
$\N_x=\{x_1, x_2, x_3, \ldots\}$ denote a copy of the natural numbers and for
every $x \in N_c$, we let $x_c$ denote a single new vertex. 
We then let $X_e = \bigcup_{x \in N_o} \N_x$ and $X_c = \bigcup_{x \in N_c} \{x_c\}$ and
define a new vertex set via
$$X_o = X\setminus \{o\} \cup X_e \cup X_c.$$
We then let $b_o: X_o \times X_o \longrightarrow [0, \infty)$ be an edge weight defined so as to be symmetric and
satisfy:
\begin{itemize}
\item $b_o(x,y) = b(x,y)$ for $x,y \in X \setminus \{o\}$,
\item for every $x \in N_o$ we let $b_o(x,x_1)>0$ for $x_1 \in \N_x$,
\item for $x \in N_o$ and $x_j, x_k \in \N_x$
we let $b_o(x_j,x_k) > 0$ if and only if $|j-k|=1$,
\item for every $x \in N_c$ we let $b_o(x,x_c)>0$ for $x_c \in X_c$,
\item $b_o$ be $0$ for all other pairs of vertices. 
\end{itemize}
In other words, we remove the vertex $o$ and
to every neighbor of $o$ in $N_o$ we now attach an infinite path and to every
neighbor of $o$ in $N_c$ we attach a single vertex and
 no other connections. The measure
$m$ defined on $X$ can be extended to $X_o$ in an arbitrary manner; however, we will specify
some additional requirements for both $m_o$ and $b_o$ below.

\begin{remark}[A comment on connectedness]
We recall that a path is a sequence of vertices $(x_n)$ with $b(x_n, x_{n+1}) > 0$
for all $n \in \N$ and that we call a graph connected if for any two vertices
there exists a path that starts at one of the vertices and ends at the other. 
We note that the graph $b_o$ over $(X_o,m_o)$ will not, in general, be connected.
This deserves some comment as we wish to analyze the $\ell^2$-Liouville property and the
essential self-adjointness of the Laplacian associated to $b_o$ and the connections between
these properties have only been established for connected graphs. We note that, a graph that 
is not connected will, in general, not satisfy the $\ell^2$-Liouville property as a harmonic function
can take on different constant values on the connected components, i.e., the maximal connected subsets
of the vertex set. In particular, if the removal of a vertex results in a graph with at least two connected
components of finite measure, then the Laplacian on the resulting graph will never satisfy the $\ell^2$-Liouville 
property. On the other hand, the Laplacian on such a graph might still be essentially self-adjoint. 
However, for our results below, we attach paths to infinity
and show that there is at least one infinite connected component where both of these properties fail
for some additional assumptions of the edge weights and measure.
In particular, the Laplacian can be decomposed into a direct sum of operators on connected components
and if the operator on one of these components is not essentially self-adjoint, then the 
Laplacian on the entire graph is not essentially self-adjoint.
\end{remark}

Given this new graph $b_o$ over $(X_o,m_o)$, Lemma~\ref{l:harmonic_extension}
shows that if $b$ is transient, then there is a unique way of using the values of $g$, 
the Green's function of $b$ over $(X,m)$,
to define a function $g_o$ on the new vertex set $X_o$ so that $g_o$
is harmonic. 
More specifically, we let $g_o$ be a function on $X_o$ defined by letting
$g_o(x) = g(x)$ for $x \in X \setminus \{o\}$ and $g_o(x_c)=g(o)$ for
every $x_c \sim x \in N_c$.
For every $x \in N_o$ and $x_1 \in \N_x$ we first let
$$g_o(x_1)=g(x) + \frac{1}{b_o(x,x_1)} \sum_{y \in X \setminus \{o\}} b(x,y)(g(x)-g(y)).$$
We note that since $x \in N_o$ we have $g(x) \neq g(o)$ and since $\ms{L}g(x)=0$
we obtain $\sum_{y \in X \setminus \{o\}} b(x,y)(g(x)-g(y)) \neq 0$.
In particular, we see that
$g_o(x_1) \neq g(x)$.
We then use Lemma~\ref{l:harmonic_extension} to define $g_o(x_r)$ for $r > 1$
so that $g_o$ is harmonic on $\N_x$. 
More specifically, if $C= b_o(x_1,x)(g_o(x_1)-g(x))$, then Lemma~\ref{l:harmonic_extension} gives
$$g_o(x_{r+1}) = g_o(x_1) + C\sum_{n=1}^r \frac{1}{b_o(x_n, x_{n+1})}$$
for $r \geq 1$.
As $C \neq 0$, we see that $g_o$ is not constant on $\N_x$.

In this way, it follows by a direct calculation that $g_o$ is harmonic
on $X_o$. 
Furthermore, if $g \in \ell^2(X,m)$, then Lemma~\ref{l:harmonic_extension} shows
how to choose the weights $b_o$ and measure $m_o$ in such a way to ensure that the
extension $g_o \in \ell^2(X_o,m_o)$. More specifically, we obtain the following statement
which can be thought as a counterpart to Theorem~\ref{t:NoLiouv_manifolds} from the manifold setting.

\begin{theorem}\label{t:graph_no_Liouv}
Let $b$ be a connected graph over $(X,m)$ such that $\ms{L}(C_c(X)) \subseteq \ell^2(X,m)$
and $\lambda_0>0$. 
Let $o \in X$ be such that the number of neighbors of $o$ is finite. 
Let $b_o$ over $(X_o,m_o)$ be defined as above and so that for every
$x \in N_o$ and for $x_n \in \N_x$ we have
$$\sum_{r=1}^\infty \left( \sum_{n=1}^{r}\frac{1}{b_o(x_n,x_{n+1})}\right)^2 m_o(x_{r+1})<\infty.$$
Then, there exists a harmonic function in $\ell^2(X_o,m_o)$ which is non-constant
on an infinite connected component of $b_o$.
In particular, the Laplacian $L_{o,c}$ associated to $b_o$ over $(X_o,m_o)$ does not satisfy
the $\ell^2$-Liouville property and is not essentially self-adjoint. 
\end{theorem}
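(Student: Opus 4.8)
The plan is to show that the explicitly constructed extension $g_o$ of the Green's function is the required witness to the failure of both properties, so the proof amounts to checking three things about $g_o$ and then handling the disconnectedness of $b_o$. Since the harmonicity of $g_o$ on all of $X_o$ has already been verified by the direct calculation preceding the statement, the first substantive step is to record that $g_o \in \ell^2(X_o, m_o)$. First I would invoke Lemma~\ref{l:greens_l^2}: because $\lambda_0 > 0$, the function $g = G(o, \cdot)$ lies in $\ell^2(X,m)$, and since $g_o = g$ on $X \setminus \{o\}$ this part of the norm is controlled. On $X_c$ the function is the constant $g(o)$, and as $o$ has only finitely many neighbors, $X_c$ is a finite set contributing a finite amount. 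On each attached path $\N_x$ with $x \in N_o$, the restriction $g_o|_{\N_x}$ is precisely the harmonic function produced by Lemma~\ref{l:harmonic_extension}, so the assumed summability condition guarantees $g_o|_{\N_x} \in \ell^2(\N_x, m_o)$; summing over the finitely many $x \in N_o$ yields $g_o \in \ell^2(X_o, m_o)$.

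Next I would exhibit the non-constancy on an infinite component. Since $\ms{L}g(o) = 1/m(o) \neq 0$, the set $N_o$ is non-empty; fixing $x \in N_o$, the vertex $x$ carries the infinite path $\N_x$, so the connected component $Y$ of $b_o$ containing $x$ is infinite. Along this path the constant $C = b_o(x_1,x)(g_o(x_1) - g(x))$ is nonzero, so by the explicit formula $g_o(x_{r+1}) = g_o(x_1) + C\sum_{n=1}^r 1/b_o(x_n, x_{n+1})$ the values of $g_o$ genuinely change along $\N_x$; hence $g_o$ is non-constant on $Y$ and therefore on $X_o$. Combined with the previous step, $g_o$ is a non-constant harmonic element of $\ell^2(X_o, m_o)$, which by definition shows that $L_{o,c}$ fails the $\ell^2$-Liouville property.

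For the failure of essential self-adjointness I would pass to the connected component $Y$. Its edge weight $b_o|_Y$ gives a connected graph for which $\ms{L}_o(C_c(Y)) \subseteq \ell^2(Y,m_o)$ holds (this is inherited from the standing symmetry assumption $\ms{L}_o(C_c(X_o)) \subseteq \ell^2(X_o,m_o)$, which is needed for $L_{o,c}$ to be symmetric and follows since the construction alters the original graph only at the finitely many neighbors of $o$ and otherwise glues on locally finite pieces), and $g_o|_Y$ is a non-constant harmonic function in $\ell^2(Y, m_o)$. Taking the contrapositive of part (1) of Corollary~\ref{c:graphs_esa_liouv}, the Laplacian associated to $Y$ is not essentially self-adjoint. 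The main obstacle is exactly that $b_o$ need not be connected, so the corollary cannot be applied to $X_o$ directly; to transfer the conclusion I would use that $\ms{L}_o$ does not couple distinct connected components, whence $\ell^2(X_o,m_o)$ splits as an orthogonal direct sum over the components and $L_{o,c}$ decomposes as the corresponding direct sum of component Laplacians. Since the deficiency indices of a direct sum are the sums of those of the summands, the direct sum is essentially self-adjoint only if every summand is; the failure on the single component $Y$ therefore forces $L_{o,c}$ to fail to be essentially self-adjoint, which completes the proof.
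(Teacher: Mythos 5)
Your proposal is correct and follows essentially the same route as the paper: $g\in\ell^2(X,m)$ via Lemma~\ref{l:greens_l^2}, the extension to each attached path is in $\ell^2$ via Lemma~\ref{l:harmonic_extension} together with the summability hypothesis and the finiteness of $N_o$ and $N_c$, non-constancy of $g_o$ on an infinite component since $N_o\neq\emptyset$, and then the contrapositive of Corollary~\ref{c:graphs_esa_liouv}(1) on that component combined with the direct-sum decomposition of $L_{o,c}$ over connected components. Your added checks (the finite contribution of $X_c$, the verification that the component inherits $\ms{L}_o(C_c(Y))\subseteq\ell^2(Y,m_o)$, and the deficiency-index justification of the direct-sum step) only make explicit what the paper leaves to its preceding remark on connectedness.
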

\begin{proof}
By Lemma~\ref{l:greens_l^2}, as we assume that $\lm_0>0$, we have $g \in \ell^2(X,m)$. 
By Lemma~\ref{l:harmonic_extension}
we obtain that $g_o \in \ell^2(\N_x, m_o)$ for every $x \in N_o$ by our summability 
assumptions. As both $N_o$ and $N_c$ are finite sets by assumption,
we obtain that $g_o \in \ell^2(X_o, m_o)$. Therefore, $g_o$ is a non-constant harmonic
function in $\ell^2(X_o,m_o)$ so that the $\ell^2$-Liouville property fails for $b_o$ over $(X_o,m_o)$. 
Furthermore, as $N_o \neq \emptyset$ it follows from the construction that there exists
at least one infinite connected component of $b_o$ over $(X_o,m_o)$ for which
$g_o$ is non-constant. In particular, for this infinite connected component, it follows
that Corollary~\ref{c:graphs_esa_liouv} applies to the restriction of $L_{o,c}$ to this component,
and thus this restriction is not essentially self-adjoint. From this it follows
that $L_{o,c}$ is not essentially self-adjoint.
\end{proof}

We note that some of the results on manifolds from the 
previous section invoke completeness in various parts. 
We will now 
make precise an analogue to geodesic completeness for graphs.
In particular, we wish to discuss how in Theorem~\ref{t:graph_no_Liouv}
the graph resulting from the removal of a vertex and attachment of paths as specified
cannot be geodesically complete.

A natural way to define a metric on a connected graph is to assign lengths to edges
and then take the length of the shortest path that connects two vertices. 
We let $\si:X \times X \longrightarrow [0,\infty)$ be a symmetric function such that
$$\si(x,y) > 0 \qquad \textup{ if and only if} \qquad x \sim y.$$
We think of $\si(x,y)$ as the length of the edge connecting $x$ and $y$ and thus
call $\si$ a length function.
For vertices $x, y \in X$ we let $\Pi_{x,y}$ denote the set of all paths from $x$ to $y$, that is,
$(x_k)_{k=0}^n \in \Pi_{x,y}$ if $(x_k)$ is a path with $x_0=x$ and $x_n =y$. We then let the length
of a path $(x_k)$ be defined by $l_\si((x_k))= \sum_{k=0}^{n-1} \si(x_k, x_{k+1})$ and define 
a metric via
$$d_{\si}(x,y)= \inf_{(x_k) \in \Pi_{x,y}} l_\si((x_k)).$$
We call any metric arising in this way a path metric.
We call a path $(x_n)$ a geodesic if $d_{\si}(x_j,x_k) = \sum_{i=j}^{k-1} \si(x_i, x_{i+1})$
for all indices $j<k$ for which the path is defined. We call the graph geodesically complete
if every infinite geodesic has infinite length, that is, $l_{\si}((x_n))=\infty$
if $(x_n)$ is a geodesic with infinitely many vertices.

A graph is called locally finite if every vertex has only finitely many neighbors.
Theorem A.1 in \cite{HKMW} states that for path metrics on locally finite graphs
geodesic completeness, metric completeness and the fact that all balls defined with respect
to the metric are finite are equivalent. Thus, for locally finite graphs we call such graphs
complete with respect to the path metric. However, to derive essential self-adjointness from
completeness, we need to consider metrics that also take into account the values of the 
edge weights as well as the vertex measure. These are the so-called intrinsic metrics which 
we introduce next. 

Specifically, we call a metric $\vr$ on $X$ intrinsic for $b$ over $(X,m)$ if
$$\sum_{y \in X} b(x,y) \vr^2(x,y) \leq m(x)$$
for all $x \in X$.
A natural way to construct an intrinsic metric using lengths is to let $\Deg(x) = \frac{1}{m(x)} \sum_{y \in X} b(x,y)$ denote the
weighted degree of a vertex $x \in X$ and let
$$\si(x,y) = \min \left \{\frac{1}{\sqrt{\Deg(x)}},\frac{1}{\sqrt{\Deg(y)}} \right\}$$
denote the length of an edge. Letting $\vr_\si = d_\si$ denote the
path metric defined via $\si$ as above, then as
$\vr_\si(x,y) \leq \si(x,y)$ for all $x,y$ with $b(x,y)>0$ we get that $\vr_\si$ is intrinsic.
Thus, intrinsic metrics always exist in the graph setting. However, in contrast to the case for manifolds,
it is not true that there exists a unique maximal intrinsic metric. For more background on the notion and use
of intrinsic metrics in our setting, see \cite{FLW, K, Woj2}.

One of the main results of \cite{HKMW} states that, for locally finite graphs and path metrics,
completeness with respect to an intrinsic path metric implies essential self-adjointness. Thus we
directly derive the following.
\begin{corollary}\label{c:incomplete}
Let $b$ be a connected locally finite graph over $(X,m)$ with $\lambda_0>0$. 
Let $b_o$ over $(X_o,m_o)$ be defined so that for every
$x \in N_o$ and $x_n \in \N_x$ we have
$$\sum_{r=1}^\infty \left( \sum_{n=1}^{r}\frac{1}{b_o(x_n,x_{n+1})}\right)^2 m_o(x_{r+1})<\infty.$$
Then, $b_o$ over $(X_o,m_o)$ is not complete with respect to any intrinsic path metric. In particular,
$$l_\si((x_n))<\infty$$ 
for all paths attached at $x \in N_o$ where $\si$ is a length function which defines an intrinsic path metric
for $b_o$ over $(X_o,m_o)$.
\end{corollary}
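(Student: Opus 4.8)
The plan is to reduce the non-completeness assertion to the finiteness of the length of each attached ray, and then to establish that finiteness by a weighted Cauchy--Schwarz estimate that converts the intrinsic condition into the summability hypothesis. First I would fix a length function $\si$ whose path metric $\vr_\si$ is intrinsic for $b_o$ over $(X_o,m_o)$, together with a vertex $x \in N_o$ and its attached ray $\N_x = \{x_1, x_2, \ldots\}$. Writing $c_r = b_o(x_r,x_{r+1})$, $\mu_r = m_o(x_r)$ and $\si_r = \si(x_r,x_{r+1})$, the goal is $l_\si = \sum_r \si_r < \infty$. Since $\N_x$ is a tree appendage joined to the rest of $X_o$ only through $x$, each edge $\{x_r,x_{r+1}\}$ is a bridge, so the shortest path between any two ray vertices runs along the ray; hence $\vr_\si(x_r,x_{r+1}) = \si_r$ and the ray is an infinite geodesic. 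Thus, once $l_\si<\infty$ is shown, the ray is an infinite geodesic of finite length, so $b_o$ (which is locally finite, since local finiteness of $b$ forces $o$ to have finitely many neighbors) is not geodesically complete, and by Theorem A.1 in \cite{HKMW} not metrically complete with respect to any intrinsic path metric.

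Next I would extract a pointwise estimate from the intrinsic condition $\sum_y b_o(z,y)\vr_\si^2(z,y) \le m_o(z)$. Applied at the ray vertex $z = x_{r+1}$, whose only neighbors are $x_r$ and $x_{r+2}$, and dropping the nonnegative $x_{r+2}$-term, this gives $c_r\si_r^2 \le \mu_{r+1}$, that is $\si_r \le \sqrt{\mu_{r+1}/c_r}$ for every $r$. Using the estimate at $x_{r+1}$ rather than at $x_r$ is deliberate: it produces $\mu_{r+1}$, which is exactly what makes the eventual bound match the hypothesis verbatim.

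The crux is then to convert $\sum_r \sqrt{\mu_{r+1}/c_r}$ into the summability assumption by Cauchy--Schwarz with the partial-sum weights $S_r = \sum_{n=1}^r 1/c_n$. Writing $a_r = 1/c_r$,
\[
\sum_{r} \sqrt{\mu_{r+1}a_r} = \sum_r \big(\sqrt{\mu_{r+1}}\,S_r\big)\frac{\sqrt{a_r}}{S_r} \le \Big(\sum_r \mu_{r+1}S_r^2\Big)^{1/2}\Big(\sum_r \frac{a_r}{S_r^2}\Big)^{1/2}.
\]
The first factor is exactly the square root of the hypothesis $\sum_r \big(\sum_{n=1}^r 1/b_o(x_n,x_{n+1})\big)^2 m_o(x_{r+1})<\infty$. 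For the second factor, $a_r = S_r - S_{r-1}$ with $S_0=0$ gives, for $r\ge 2$, the bound $a_r/S_r^2 \le (S_r-S_{r-1})/(S_rS_{r-1}) = 1/S_{r-1} - 1/S_r$, which telescopes to at most $1/S_1 = c_1$, while the $r=1$ term equals $c_1$; hence $\sum_r a_r/S_r^2 \le 2c_1 < \infty$. This yields $l_\si < \infty$, and the reduction above completes the proof.

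The main obstacle is precisely this last step. The naive estimate $l_\si \le \big(\sum_r \mu_r\big)^{1/2}\big(\sum_r 1/c_r\big)^{1/2}$ from plain Cauchy--Schwarz fails: although the hypothesis does force $\sum_r \mu_r<\infty$ (since $S_r \ge S_1 = 1/c_1 > 0$ gives $\sum_r \mu_{r+1} \le S_1^{-2}\sum_r S_r^2\mu_{r+1}<\infty$), it does \emph{not} force $\sum_r 1/c_r<\infty$, as the partial sums $S_r$ may diverge. The weighted pairing circumvents this by matching each $\sqrt{a_r}$ against $S_r$, so that the possibly divergent $\sum_r 1/c_r$ is replaced by the always-convergent $\sum_r a_r/S_r^2$ while the growth of $S_r$ is absorbed into the hypothesis. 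Verifying the convergence of this latter sum via the telescoping bound, and checking carefully that the bridge structure of $\N_x$ gives both $\vr_\si(x_r,x_{r+1})=\si_r$ and the geodesic property, are the two points that require attention.
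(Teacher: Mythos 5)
Your proof is correct, but it takes a genuinely different route from the paper's. The paper first establishes incompleteness via spectral theory: Theorem~\ref{t:graph_no_Liouv} shows that $L_{o,c}$ is not essentially self-adjoint, and Theorem~2 of \cite{HKMW} (completeness with respect to an intrinsic path metric implies essential self-adjointness for locally finite graphs) then forces incompleteness; the finiteness of $l_\si$ along the attached rays is deduced only afterwards, by contradiction, since a ray of infinite intrinsic length could be attached to a complete graph with $\lambda_0>0$ to produce a complete graph, contradicting the first part. You reverse this order and argue directly: the intrinsic inequality evaluated at the interior ray vertices $x_{r+1}$ gives $b_o(x_r,x_{r+1})\,\si^2(x_r,x_{r+1})\le m_o(x_{r+1})$ (correctly using that each ray edge is a bridge, so the path metric of an edge equals its assigned length and the ray is an infinite geodesic), and the weighted Cauchy--Schwarz pairing against the partial sums $S_r$, together with the telescoping bound $\sum_r a_r/S_r^2\le 2/S_1$, converts the summability hypothesis into $l_\si<\infty$; incompleteness then follows from Theorem~A.1 of \cite{HKMW}. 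Your argument is more elementary and self-contained: it uses neither the Green's function, nor transience, nor essential self-adjointness, and it never invokes $\lambda_0>0$, so it directly proves the strengthening that the paper only asserts in the remark following the corollary; it also yields an explicit quantitative bound on $l_\si$. What the paper's route buys is brevity given the machinery already in place and a tighter thematic link between incompleteness and essential self-adjointness. Your diagnosis of why the unweighted Cauchy--Schwarz estimate fails (the hypothesis does not force $\sum_r 1/b_o(x_r,x_{r+1})<\infty$) is exactly the right reason the weighted pairing is needed.
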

\begin{proof}
The first part of the statement follows 
from Theorem~\ref{t:graph_no_Liouv} above and Theorem~2 in \cite{HKMW} which gives that
the failure of essential self-adjointness implies incompleteness with respect to intrinsic path metrics. In particular,
we note that the local finiteness assumption implies that $o$ has only finitely many neighbors
and also that $\mathcal{L}(C_c(X)) \subseteq \ell^2(X,m)$ as follows by a direct calculation.
For the statement on the length of paths, if $l_\si((x_n))=\infty$ for some path satisfying
the summability assumption, 
then we could start with a complete graph with $\lambda_0>0$ and 
carrying out the procedure to construct $b_o$ over $(X_o,m_o)$ using this path
would then result in a complete
graph as we would attach only paths to infinity which have infinite length. Thus, the resulting graph
would not have any geodesic of finite length and would thus be complete giving a contradiction to what we
have just shown.
Thus, we obtain that $l_\si((x_n))<\infty$ for all paths satisfying the summability assumption
and for all lengths $\si$ which define intrinsic path metrics on $b_o$ over $(X_o,m_o)$.
This completes the proof.
\end{proof}

We finish the paper with two remarks.

\begin{remark}
The assumption that $\lm_0>0$ can be removed in the corollary above. 
In particular, we note that both the fact that $\si$ defines an intrinsic path metric for $b_o$ over $(\N_x,m_o)$
and the summability assumption on the edge weights and measure on the path $\N_x$
are independent of the original graph $b$ over $(X,m)$. Thus, we can use the result above
to show that all such paths must have finite length with respect to an intrinsic metric regardless
of the starting graph. Then, attaching such a path to any graph will then give metric incompleteness
if $\si$ is extended to the graph in such a way as to make an intrinsic path metric. We note that we
have assumed transience of $b$ over $(X,m)$ 
for the construction of $b_o$ over $(X_o,m_o)$; however, this is also not necessary for this result
as attaching paths of finite length will always produce an incomplete graph.
\end{remark}

\begin{remark}
We recall that Theorem~\ref{922theorem7} in the manifold setting gives that if a manifold
is complete and the dimension of the manifold is greater than or equal to 4, 
then removing a point results in a manifold which satisfies the $L^2$-Liouville property.
In fact, by \cite{M}, it follows in this case that the Laplacian on the manifold is essentially self-adjoint.

An analogue to this for the graph case is that starting with a locally finite complete graph, by removing
a point and attaching paths which have infinite length with respect to a length which gives an intrinsic metric,
the resulting graph is still complete. In particular, the Laplacian
on such a graph is essentially self-adjoint and, if connected, satisfies the $\ell^2$-Liouville property.

In terms of capacity, there is a capacity which is positive for points in Euclidean space for dimensions 2 and 3,
while zero for points in dimensions $4$ and above, see \cite{HKM}. 
The graph case also has a counterpart in the sense that when adding paths to infinity, we may introduce
a boundary point at infinity when such paths have finite length and then consider the capacity of such a point.
In this context, it would be interesting to see if there is a condition
that would force either positive capacity or capacity zero of a boundary point at infinity for every intrinsic path metric.
\end{remark}

\section*{Acknowledgements}
The authors are grateful to Marcel Schmidt for a careful reading of the manuscript
and for helpful comments. R.K.W.~would also like to thank J{\'o}zef Dodziuk for his support
and for many fruitful discussions. Furthermore, the authors would like to thank the referees
for useful comments and Isaac Pesenson for the invitation to submit this article.

\begin{bibdiv}
\begin{biblist}

\bib{ABR}{book}{
   author={Axler, Sheldon},
   author={Bourdon, Paul},
   author={Ramey, Wade},
   title={Harmonic function theory},
   series={Graduate Texts in Mathematics},
   volume={137},
   edition={2},
   publisher={Springer-Verlag, New York},
   date={2001},
   pages={xii+259},
   isbn={0-387-95218-7},
   review={\MR{1805196}},
   doi={10.1007/978-1-4757-8137-3},
}

\bib{BP}{article}{
   author={Boscain, Ugo},
   author={Prandi, Dario},
   title={Self-adjoint extensions and stochastic completeness of the
   Laplace-Beltrami operator on conic and anticonic surfaces},
   journal={J. Differential Equations},
   volume={260},
   date={2016},
   number={4},
   pages={3234--3269},
   issn={0022-0396},
   review={\MR{3434398}},
   doi={10.1016/j.jde.2015.10.011},
}

\bib{C}{article}{
   author={Chernoff, Paul R.},
   title={Essential self-adjointness of powers of generators of hyperbolic
   equations},
   journal={J. Functional Analysis},
   volume={12},
   date={1973},
   pages={401--414},
   review={\MR{0369890 (51 \#6119)}},
}

\bib{CL}{article}{
   author={Chen, Roger},
   author={Li, Peter},
   title={On Poincar\'{e} type inequalities},
   journal={Trans. Amer. Math. Soc.},
   volume={349},
   date={1997},
   number={4},
   pages={1561--1585},
   issn={0002-9947},
   review={\MR{1401517}},
   doi={10.1090/S0002-9947-97-01813-8},
}

\bib{CdV}{article}{
   author={Colin de Verdi\`ere, Yves},
   title={Pseudo-laplaciens. I},
   language={French, with English summary},
   journal={Ann. Inst. Fourier (Grenoble)},
   volume={32},
   date={1982},
   number={3},
   pages={xiii, 275--286},
   issn={0373-0956},
   review={\MR{688031}},
}

\bib{FLW}{article}{
   author={Frank, Rupert L.},
   author={Lenz, Daniel},
   author={Wingert, Daniel},
   title={Intrinsic metrics for non-local symmetric Dirichlet forms and
   applications to spectral theory},
   journal={J. Funct. Anal.},
   volume={266},
   date={2014},
   number={8},
   pages={4765--4808},
   issn={0022-1236},
   review={\MR{3177322}},
   doi={10.1016/j.jfa.2014.02.008},
}

\bib{G1}{article}{
   author={Gaffney, Matthew P.},
   title={The harmonic operator for exterior differential forms},
   journal={Proc. Nat. Acad. Sci. U. S. A.},
   volume={37},
   date={1951},
   pages={48--50},
   issn={0027-8424},
   review={\MR{0048138 (13,987b)}},
}

\bib{G2}{article}{
   author={Gaffney, Matthew P.},
   title={A special Stokes's theorem for complete Riemannian manifolds},
   journal={Ann. of Math. (2)},
   volume={60},
   date={1954},
   pages={140--145},
   issn={0003-486X},
   review={\MR{0062490 (15,986d)}},
}

\bib{GHKLW}{article}{
   author={Georgakopoulos, Agelos},
   author={Haeseler, Sebastian},
   author={Keller, Matthias},
   author={Lenz, Daniel},
   author={Wojciechowski, Rados{\l}aw K.},
   title={Graphs of finite measure},
   journal={J. Math. Pures Appl. (9)},
   volume={103},
   date={2015},
   number={5},
   pages={1093--1131},
   issn={0021-7824},
   review={\MR{3333051}},
   doi={10.1016/j.matpur.2014.10.006},
}

\bib{GM}{article}{
   author={Gesztesy, Fritz},
   author={Mitrea, Marius},
   title={A description of all self-adjoint extensions of the Laplacian and
   Kre\u{\i}n-type resolvent formulas on non-smooth domains},
   journal={J. Anal. Math.},
   volume={113},
   date={2011},
   pages={53--172},
   issn={0021-7670},
   review={\MR{2788354}},
   doi={10.1007/s11854-011-0002-2},
}

\bib{G}{article}{
   author={Grigor{\cprime}yan, Alexander},
   title={Analytic and geometric background of recurrence and non-explosion
   of the Brownian motion on Riemannian manifolds},
   journal={Bull. Amer. Math. Soc. (N.S.)},
   volume={36},
   date={1999},
   number={2},
   pages={135--249},
   issn={0273-0979},
   review={\MR{1659871 (99k:58195)}},
   doi={10.1090/S0273-0979-99-00776-4},
}

\bib{G18}{book}{
   author={Grigor{\cprime}yan, Alexander},
   title={Introduction to analysis on graphs},
   series={University Lecture Series},
   volume={71},
   publisher={American Mathematical Society, Providence, RI},
   date={2018},
   pages={viii+150},
   isbn={978-1-4704-4397-9},
   review={\MR{3822363}},
   doi={10.1090/ulect/071},
}

\bib{G19}{book}{
   author={Grigor{\cprime}yan, Alexander},
   title={Heat kernel and analysis on manifolds},
   series={AMS/IP Studies in Advanced Mathematics},
   volume={47},
   publisher={American Mathematical Society, Providence, RI; International
   Press, Boston, MA},
   date={2009},
   pages={xviii+482},
   isbn={978-0-8218-4935-4},
   review={\MR{2569498 (2011e:58041)}},
}

\bib{HKLW}{article}{
   author={Haeseler, Sebastian},
   author={Keller, Matthias},
   author={Lenz, Daniel},
   author={Wojciechowski, Rados{\l}aw},
   title={Laplacians on infinite graphs: Dirichlet and Neumann boundary
   conditions},
   journal={J. Spectr. Theory},
   volume={2},
   date={2012},
   number={4},
   pages={397--432},
   issn={1664-039X},
   review={\MR{2947294}},
}

\bib{HL}{book}{
   author={Han, Qing},
   author={Lin, Fanghua},
   title={Elliptic partial differential equations},
   series={Courant Lecture Notes in Mathematics},
   volume={1},
   edition={2},
   publisher={Courant Institute of Mathematical Sciences, New York; American
   Mathematical Society, Providence, RI},
   date={2011},
   pages={x+147},
   isbn={978-0-8218-5313-9},
   review={\MR{2777537}},
}

\bib{HKM}{article}{
   author={Hinz, Michael},
   author={Kang, Seunghyun},
   author={Masamune, Jun},
   title={Probabilistic characterizations of essential self-adjointness and
   removability of singularities},
   language={English, with English and Russian summaries},
   journal={Mat. Fiz. Komp\cprime yut. Model.},
   date={2017},
   number={3(40)},
   pages={148--162},
   issn={2587-6325},
   review={\MR{3706135}},
   doi={10.15688/mpcm.jvolsu.2017.3.11},
}

\bib{HK}{article}{
   author={Hua, Bobo},
   author={Keller, Matthias},
   title={Harmonic functions of general graph Laplacians},
   journal={Calc. Var. Partial Differential Equations},
   volume={51},
   date={2014},
   number={1-2},
   pages={343--362},
   issn={0944-2669},
   review={\MR{3247392}},
   doi={10.1007/s00526-013-0677-6},
}

\bib{HKMW}{article}{
   author={Huang, Xueping},
   author={Keller, Matthias},
   author={Masamune, Jun},
   author={Wojciechowski, Rados{\l}aw K.},
   title={A note on self-adjoint extensions of the Laplacian on weighted
   graphs},
   journal={J. Funct. Anal.},
   volume={265},
   date={2013},
   number={8},
   pages={1556--1578},
   issn={0022-1236},
   review={\MR{3079229}},
   doi={10.1016/j.jfa.2013.06.004},
}

\bib{I}{article}{
   author={Inoue, Atsushi},
   title={Essential self-adjointness of Schr{\"o}dinger operators on the weighted integers},
   journal={forthcoming},
}

\bib{IMW}{article}{
   author={Inoue, Atsushi},
   author={Masamune, Jun},
   author={Wojciechowski, Rados{\l}aw K.},
   title={Essential self-adjointness of the Laplacian on weighted graphs: stability and characterizations},
   journal={forthcoming},
}

\bib{K}{article}{
   author={Keller, Matthias},
   title={Intrinsic metrics on graphs: a survey},
   conference={
      title={Mathematical technology of networks},
   },
   book={
      series={Springer Proc. Math. Stat.},
      volume={128},
      publisher={Springer, Cham},
   },
   date={2015},
   pages={81--119},
   review={\MR{3375157}},
}

\bib{KL}{article}{
   author={Keller, Matthias},
   author={Lenz, Daniel},
   title={Dirichlet forms and stochastic completeness of graphs and
   subgraphs},
   journal={J. Reine Angew. Math.},
   volume={666},
   date={2012},
   pages={189--223},
   issn={0075-4102},
   review={\MR{2920886}},
  doi={10.1515/CRELLE.2011.122},
}

\bib{KLW}{article}{
   author={Keller, Matthias},
   author={Lenz, Daniel},
   author={Wojciechowski, Rados{\l}aw K.},
   title={Volume growth, spectrum and stochastic completeness of infinite
   graphs},
   journal={Math. Z.},
   volume={274},
   date={2013},
   number={3-4},
   pages={905--932},
   issn={0025-5874},
   review={\MR{3078252}},
   doi={10.1007/s00209-012-1101-1},
}

\bib{Li}{book}{
   author={Li, Peter},
   title={Geometric analysis},
   series={Cambridge Studies in Advanced Mathematics},
   volume={134},
   publisher={Cambridge University Press, Cambridge},
   date={2012},
   pages={x+406},
   isbn={978-1-107-02064-1},
   review={\MR{2962229}},
   doi={10.1017/CBO9781139105798},
}

\bib{LS}{article}{
   author={Li, Peter},
   author={Schoen, Richard},
   title={$L^p$ and mean value properties of subharmonic functions on
   Riemannian manifolds},
   journal={Acta Math.},
   volume={153},
   date={1984},
   number={3-4},
   pages={279--301},
   issn={0001-5962},
   review={\MR{766266}},
   doi={10.1007/BF02392380},
}

\bib{M}{article}{
   author={Masamune, Jun},
   title={Essential self-adjointness of Laplacians on Riemannian manifolds
   with fractal boundary},
   journal={Comm. Partial Differential Equations},
   volume={24},
   date={1999},
   number={3-4},
   pages={749--757},
   issn={0360-5302},
   review={\MR{1683058}},
   doi={10.1080/03605309908821442},
}

\bib{RS1}{book}{
   author={Reed, Michael},
   author={Simon, Barry},
   title={Methods of modern mathematical physics. I},
   edition={2},
   note={Functional analysis},
   publisher={Academic Press, Inc. [Harcourt Brace Jovanovich, Publishers],
   New York},
   date={1980},
   pages={xv+400},
   isbn={0-12-585050-6},
   review={\MR{751959}},
}

\bib{RS2}{book}{
   author={Reed, Michael},
   author={Simon, Barry},
   title={Methods of modern mathematical physics. II. Fourier analysis,
   self-adjointness},
   publisher={Academic Press [Harcourt Brace Jovanovich, Publishers], New
   York-London},
   date={1975},
   pages={xv+361},
   review={\MR{0493420}},
}

\bib{Sch}{article}{
   author={Schmidt, Marcel},
   title={Global properties of Dirichlet forms on discrete spaces},
   journal={Dissertationes Math.},
   volume={522},
   date={2017},
   pages={43},
   issn={0012-3862},
   review={\MR{3649359}},
   doi={10.4064/dm738-7-2016},
}

\bib{Sch2}{article}{
   author={Schmidt, Marcel},
   title={On the existence and uniqueness of self-adjoint realizations of discrete (magnetic) Schr{\"o}dinger operators},
   conference={
      title={Analysis and geometry on graphs and manifolds},
   },
   book={
      series={London Math. Soc. Lecture Note Ser.},
      volume={461},
      publisher={Cambridge Univ. Press, Cambridge},
   },
   date={2020},
}

\bib{Soa}{book}{
   author={Soardi, Paolo M.},
   title={Potential theory on infinite networks},
   series={Lecture Notes in Mathematics},
   volume={1590},
   publisher={Springer-Verlag},
   place={Berlin},
   date={1994},
   pages={viii+187},
   isbn={3-540-58448-X},
   review={\MR{1324344 (96i:31005)}},
}

\bib{S}{article}{
   author={Strichartz, Robert S.},
   title={Analysis of the Laplacian on the complete Riemannian manifold},
   journal={J. Funct. Anal.},
   volume={52},
   date={1983},
   number={1},
   pages={48--79},
   issn={0022-1236},
   review={\MR{705991}},
   doi={10.1016/0022-1236(83)90090-3},
}

\bib{W}{book}{
   author={Woess, Wolfgang},
   title={Random walks on infinite graphs and groups},
   series={Cambridge Tracts in Mathematics},
   volume={138},
   publisher={Cambridge University Press},
   place={Cambridge},
   date={2000},
   pages={xii+334},
   isbn={0-521-55292-3},
   review={\MR{1743100 (2001k:60006)}},
   doi={10.1017/CBO9780511470967},
}

\bib{Woe}{book}{
   author={Woess, Wolfgang},
   title={Denumerable Markov chains},
   series={EMS Textbooks in Mathematics},
   note={Generating functions, boundary theory, random walks on trees},
   publisher={European Mathematical Society (EMS), Z\"urich},
   date={2009},
   pages={xviii+351},
   isbn={978-3-03719-071-5},
   review={\MR{2548569 (2011f:60142)}},
   doi={10.4171/071},
}

\bib{Woj}{book}{
   author={Wojciechowski, Radoslaw Krzysztof},
   title={Stochastic completeness of graphs},
   note={Thesis (Ph.D.)--City University of New York},
   publisher={ProQuest LLC, Ann Arbor, MI},
   date={2008},
   pages={87},
   isbn={978-0549-58579-4},
   review={\MR{2711706}},
}

\bib{Woj2}{article}{
   author={Wojciechowski, Rados{\l}aw K.},
   title={Stochastic completeness of graphs: bounded Laplacians, intrinsic metrics, volume growth and curvature},
   journal={J. Fourier Anal. Appl.},
   date={to appear},
   eprint={arXiv:2010.02009 [math.MG]},
}

\bib{Y}{article}{
   author={Yau, Shing Tung},
   title={Some function-theoretic properties of complete Riemannian manifold
   and their applications to geometry},
   journal={Indiana Univ. Math. J.},
   volume={25},
   date={1976},
   number={7},
   pages={659--670},
   issn={0022-2518},
   review={\MR{0417452}},
}

\end{biblist}
\end{bibdiv}

\end{document}